\numberwithin{equation}{section}
\newcommand{\R}{\mathbb{R}}
\newcommand{\N}{\mathbb{N}}
\newcommand{\T}{\mathbb{T}}
\newcommand{\D}{\mathcal{D}}
\newcommand{\KW}{\mathrm{KW}}
\newcommand{\HW}{\mathrm{HW}}
\renewcommand{\L}{\mathrm{L}}
\newcommand{\X}{\mathcal{X}}
\newcommand{\Rd}{\R^d}
\newcommand{\Td}{\T^d}
\newcommand{\TdTd}{\T^d\times \T^d}
\newcommand{\TdRd}{\T^d\times \R^d}
\newcommand{\XRd}{\X \times \R^d}
\DeclarePairedDelimiter{\abs}{\lvert}{\rvert}
\DeclarePairedDelimiter{\norm}{\lVert}{\rVert}
\DeclareMathOperator{\Id}{Id}
\DeclareMathOperator{\divergence}{div}
\DeclareMathOperator{\loc}{loc}
\numberwithin{equation}{section}
\newtheorem{theorem}{Theorem}[section]
\newtheorem{lemma}[theorem]{Lemma}
\newtheorem{proposition}[theorem]{Proposition}
\newtheorem{definition}[theorem]{Definition}
\newtheorem{remark}[theorem]{Remark}
\let\oldabs\abs
\def\abs{\@ifstar{\oldabs}{\oldabs*}}
\let\oldnorm\norm
\def\norm{\@ifstar{\oldnorm}{\oldnorm*}}
\begin{document}

\title[Stability estimates for the Vlasov-Poisson system in $p$-kinetic Wasserstein
distances]{Stability estimates for the Vlasov-Poisson system in $p$-kinetic Wasserstein
distances}

\author{Mikaela Iacobelli}
\address{ETH Z\"urich, Department of Mathematics, R\"amistrasse 101, 8092 Z\"urich, Switzerland} \email{mikaela.iacobelli@math.ethz.ch}

\author{Jonathan Junn\'e}
\address{TU Delft, Delft Institute of Applied Mathematics, Mekelweg 4, 2628 CD Delft, Netherlands}
\email{j.junne@tudelft.nl}

\subjclass[2020]{35Q83, 82C40, 82D10, 35B35}
\maketitle
\begin{abstract}
  We extend Loeper's $L^2$-estimate \cite[Theorem 2.9]{Loep} relating the force fields to the densities for the Vlasov-Poisson system to $L^p$, with $1 < p <+\infty$, based on the Helmholtz-Weyl decomposition. This allows us to generalize both the classical Loeper's $2$-Wasserstein stability estimate \cite[Theorem 1.2]{Loep} and the recent stability estimate by the first author relying on the newly introduced kinetic Wasserstein distance \cite[Theorem 3.1]{Iacobelli2022} to kinetic Wasserstein distances of order $1 <p<+\infty$.
\end{abstract}

\section{Introduction}

\subsection{General overview}
Monge-Kantorovich distances, also known as Wasserstein distances, are used extensively in kinetic theory, in particularly in the context of stability, convergence to equilibrium and mean-field limits.
A first celebrated result for the $1$-Monge-Kantorovich distance is due to Dobrushin \cite[Theorem 1]{Dob}, who proved the well-posedness for Vlasov equations with $C^{1,1}$ potentials. An explanation of Dobrushin’s stability estimate and its consequences on the mean-field limit for the Vlasov equation can be found in \cite[Chapter 1]{Golse} and \cite[Chapter 3]{JabinReviewMFL}, and we refer to \cite[Section 3]{GPI-proc-wp} for a survey on well-posedness for the Vlasov-Poisson system.

Regarding the $2$-Wasserstein distance, Loeper proved \cite[Theorem 1.2]{Loep} a uniqueness criterion for solutions with bounded density based on a $2$-Wasserstein distance stability estimate using both a link between the $\dot{H}^{-1}$-seminorm and the $2$-Wasserstein distance, and the fact that the Coulomb kernel is generated by a potential solving the Poisson equation. In addition to the Vlasov-Poisson system, this criterion gives a new proof of uniqueness \textit{\`a la} Yudovich for $2D$ Euler. Beyond bounded density, Loeper's uniqueness criterion has been extended for some suitable Orlicz spaces using the 1-Monge-Kantorovich distance by Miot \cite[Theorem 1.1]{MiotCMP2016} and Miot, Holding \cite[Theorem 1.1]{HoldingMiot2018}. 

On the Torus, Loeper's criterion was improved by Han-Kwan, Iacobelli \cite[Theorem 3.1]{HKI2} for the Vlasov-Poisson system, and more recently for the Vlasov-Poisson system with massless electrons by Griffin-Pickering, Iacobelli \cite[Theorem 4.1]{GPI-WP}. 

The aim of this work is twofold. The first goal is to generalize Loeper's $2$-Wasserstein distance stability estimate to $p$-Wasserstein distances for $1 < p < +\infty$. The second goal is to extend the recent stability estimate \cite[Theorem 3.1]{Iacobelli2022} by the first author relying on the newly introduced kinetic Wasserstein distance \cite[Theorem 3.1]{Iacobelli2022} to kinetic Wasserstein distances of order $1 <p<+\infty$.

\subsection{Definitions and main results}
We first recall the classical Wasserstein distance (see \cite[Chapter 6]{Vil09}) on the product space $\X \times \Rd$, with $\X$ denoting in the sequel either the $d$-dimensional torus $\Td$ or the Euclidean space $\Rd$:
\begin{definition}\label{def:Wasserstein}
    Let $\mu, \nu$ be two probability measures on $\X \times \Rd$. The \emph{Wasserstein distance} of order $p$, with $p \ge 1$, between $\mu$ and $\nu$ is defined as
    \begin{equation*}
        W_p(\mu, \nu) := \left( \inf_{\pi \in \Pi(\mu, \nu)} \int_{(\X \times \Rd)^2} \abs{x - y}^p + \abs{v - w}^p \: d\pi(x,v,y,w) \right)^{1/p},
    \end{equation*}
    where $\Pi(\mu, \nu)$ is the set of \emph{couplings}; that is, the set of probability measures with \emph{marginals} $\mu$ and $\nu$. A coupling is said to be \emph{optimal} if it minimizes the Wasserstein distance.
\end{definition}

We consider two solutions $f_1, f_2$ of the Vlasov-Poisson system on $\X$, with either gravitational or electrostatic interaction encoded by $\sigma = \pm 1$, namely, 
\begin{equation}\label{eq:VP Rd}
    \partial_t f + v \cdot \partial_x f - \nabla U \cdot \nabla_v f = 0, \quad \sigma\Delta U := \rho_f - 1, \quad \rho_f := \int_{\Rd} f \: dv
\end{equation}
on the torus, and
\begin{equation}\label{eq:VP Td}
    \partial_t f + v \cdot \partial_x f - \nabla U \cdot \nabla_v f = 0, \quad \sigma\Delta U := \rho_f, \quad \rho_f := \int_{\Rd} f \: dv
\end{equation}
on the whole space, with initial profiles $f_1(0), f_2(0)$, and respective flows $Z_1 := (X_1, V_1)$ and $Z_2 := (X_2, V_2)$ satisfying the system of characteristics 
\begin{equation*}
    \dot{X} = V, \quad \dot{V} = -\nabla U, \quad X(0, x, v) = x, \quad V(0, x, v) = v.
\end{equation*}
The flows yield solutions $f_1(t) = Z_1(t, \cdot, \cdot)_\# f_1(0)$ and $f_2(t) = Z_2(t, \cdot, \cdot)_\# f_2(0)$ as pushforwards of the initial data.

\noindent{\bf (1) A new $L^p$-estimate for the difference of force fields}.
Loeper estimates the $L^2$-norm \cite[Theorem 2.9]{Loep} of the difference of force fields with the Wasserstein distance between the densities. We extend the $L^2$-estimate to $L^p$ for $1 < p < +\infty$ using the Helmholtz-Weyl decomposition of $L^p(\X) = G_p(\X) \oplus H_p(\X)$ into its hydrodynamic space that we recall (see \cite[Chapter III]{Galdi2011}):
\begin{definition}\label{def:Hydro spaces}
    The \emph{hydrodynamic spaces} are the closed subspaces of $L^p(\X)$ defined as
    \begin{equation*}
        G_p(\X) := \left\{ u \in L^p(\X); \quad u = \nabla w \text{ for some } w \in W^{1,p}_{\loc}(\X)\right\}
    \end{equation*}
    and
    \begin{equation*}
        H_p(\X) := \overline{\left\{u \in C_c^\infty(\X);\quad \divergence u = 0 \text{ on }\X\right\}}.
    \end{equation*}
\end{definition}
\begin{remark}\label{rmk:HW L^2}
    Note that this decomposition breaks down for $p = 1$ or $p = +\infty$ and does not hold for general domains in $L^p$. It is equivalent to the solvability of an Neumann problem (see \cite[Lemma III.1.2]{Galdi2011}), while an orthogonal decomposition in $L^2$ is always possible, whatever the domain is.
\end{remark}

In our setting; that is either on the torus or on the Euclidean space, the Helmholtz-Weyl decomposition holds \cite[Theorem III.1.1 \& Theorem III.1.2]{Galdi2011};
\begin{theorem}[Helmholtz-Weyl decomposition] \label{thm:HW}
    The Helmholtz-Weyl decomposition holds for $L^p(\X)$, for any $1 < p < +\infty$; that is,
    \begin{equation*}
        L^p(\X) = G_p(\X) \oplus H_p(\X).
    \end{equation*}
    Moreover, when $p = 2$, this decomposition is orthogonal.
\end{theorem}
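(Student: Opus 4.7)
The plan is to construct the decomposition explicitly through Calder\'on--Zygmund/Fourier-multiplier theory and then verify both orthogonality at $p = 2$ and directness of the sum for general $p$.

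\textbf{Projection onto $G_p(\X)$.} Given $u \in L^p(\X)$, the first step is to solve the distributional equation $\Delta w = \divergence u$ with $\nabla w \in L^p(\X)$. On $\Td$, I would define $w$ through its Fourier coefficients, $\wh w(\xi) := -i\,(\xi \cdot \wh u(\xi))/\abs{\xi}^2$ for $\xi \in \Z^d \setminus \{0\}$, with the mean mode set to zero; a direct Fourier computation then expresses $\nabla w$ via the matrix-valued multiplier $m(\xi) = \xi \otimes \xi/\abs{\xi}^2$, whose entries satisfy the Marcinkiewicz condition on $\Z^d \setminus \{0\}$ and so define a bounded operator on $L^p(\Td)$ for $1 < p < +\infty$. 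On $\Rd$ the same calculation realizes $\nabla w$ as a composition of Riesz transforms, bounded on $L^p(\Rd)$ for $1 < p < +\infty$ by the classical Calder\'on--Zygmund theorem. This produces a bounded linear projection $P_G : L^p(\X) \to G_p(\X)$.

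\textbf{The complement lies in $H_p(\X)$.} Setting $P_H := \Id - P_G$, by construction $\divergence (P_H u) = \divergence u - \Delta w = 0$ in $\D'(\X)$. The remaining point is that an $L^p$ divergence-free field belongs to the closure $H_p(\X)$ of $C_c^\infty$ divergence-free fields. On $\Td$ this follows directly by mollification and Fourier truncation. On $\Rd$ I would mollify and then truncate with a smooth cutoff, correcting the small divergence created by the cutoff by subtracting a gradient produced by a Bogovskii-type right inverse of $\divergence$ on an annulus; this preserves zero divergence at a cost of $o(1)$ in $L^p$. I expect this density argument on $\Rd$ to be the main technical obstacle.

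\textbf{Directness and orthogonality.} If $u \in G_p(\X) \cap H_p(\X)$, then $u = \nabla w$ with $\Delta w = 0$, so each component of $\nabla w$ is harmonic and lies in $L^p$. On $\Td$, Fourier expansion forces $\nabla w = 0$; on $\Rd$, applying the mean-value identity on balls of radius $R$ followed by H\"older's inequality gives a pointwise bound $\abs{\nabla w(x)} \lesssim R^{-d/p} \norm{\nabla w}_{L^p(\Rd)}$, which tends to $0$ as $R \to +\infty$, so $\nabla w \equiv 0$. Hence the sum is direct. Finally, for $p = 2$, orthogonality follows by approximating $v \in H_2(\X)$ by $v_n \in C_c^\infty(\X)$ with $\divergence v_n = 0$ and integrating by parts against $u = \nabla w \in G_2(\X)$: one has $\int \nabla w \cdot v_n = -\int w\, \divergence v_n = 0$, then passes to the limit using that $w \in W^{1,2}_{\loc}(\X)$.
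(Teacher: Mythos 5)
Note that the paper does not prove Theorem \ref{thm:HW}: it is quoted directly from Galdi's monograph (Theorems III.1.1 and III.1.2 there), and Remark \ref{rmk:HW L^2} merely records that validity of the decomposition is equivalent to solvability of a generalized Neumann problem. Your proposal is therefore a genuine, self-contained proof where the paper offers only a citation. As such it is sound: constructing $P_G$ by solving $\Delta w = \divergence u$ and invoking $L^p$-boundedness of the matrix multiplier $\xi\otimes\xi/\abs{\xi}^2$ (Riesz transforms on $\Rd$, a discrete Mikhlin--Marcinkiewicz multiplier on $\Td$) is exactly the mechanism behind Galdi's result in the boundaryless setting, since there the Neumann problem reduces to the Poisson equation. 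The Liouville-type directness argument (a harmonic $L^p$ gradient must vanish, by Fourier on $\Td$ and by mean-value plus H\"older on $\Rd$) and the $p=2$ orthogonality via integration by parts against $w\in W^{1,2}_{\loc}$ are both correct. The one genuinely delicate step---as you rightly flag---is the density of divergence-free $C_c^\infty(\Rd)$ fields in $\ker(\divergence)\cap L^p(\Rd)$, which is what guarantees $P_H u\in H_p(\Rd)$ rather than just $\divergence P_H u=0$. Your Bogovskii correction does work, and the bookkeeping closes: after mollifying and cutting off with $\chi_R$, the error $\nabla\chi_R\cdot v$ has zero mean on the annulus $B_{2R}\setminus B_R$ and $L^p$-norm $\lesssim R^{-1}\norm{v}_{L^p(B_{2R}\setminus B_R)}$, while the Bogovskii operator on that annulus scales so as to gain a factor $R$ in $L^p$; the two cancel, leaving a correction of size $\norm{v}_{L^p(B_{2R}\setminus B_R)}\to 0$. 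With that step written out, your sketch compiles into a complete proof of the statement the paper imports from Galdi.
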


The validity of the Helmholtz-Weyl decomposition implies the existence of an Helmholtz-Weyl bounded linear projection operator (see \cite[Remark III.1.1]{Galdi2011})
\begin{equation*}
    \displaystyle P_p : L^p(\X) \to H_p(\X)
\end{equation*}
with range $H_p(\X)$ and with $G_p(\X)$ as null space. More precisely, there is a constant $C_{P_p} > 0$ that only depends on $p$ and $\X$ such that for all $u \in L^p(\X)$, it holds
\begin{equation}\label{eq:HW proj}
    \displaystyle \norm{P_p(u)}_{L^p(\X)} \le C_{P_p}\norm{u}_{L^p(\X)}.
\end{equation}

Using optimal transport techniques, Loeper manages to link the strong dual homogeneous Sobolev norm and Wasserstein distances between densities, and we recall those notions:  
\begin{definition}\label{def:Homogeneous Sobolev}
    Let $1 < p < +\infty$. The \emph{homogeneous Sobolev space} is the space
    \begin{equation*}
        \dot{W}^{1,p}(\X) := \left\{[g];\quad g \in W^{1,p}_{\loc}(\X), \quad \nabla g \in L^p(\X) \right\},
    \end{equation*}
    where $[\cdot] := \{\cdot + c; \: c\in\R\}$ denotes the equivalence class of functions up to a constant, together with the norm
    \begin{equation*}
        \norm{[g]}_{\dot{W}^{1,p}(\X)} := \norm{\nabla g}_{L^p(\X)}.
    \end{equation*}
\end{definition}
This is a Banach space for which the equivalence classes of test functions
\begin{equation*}
    \dot{\D}(\X) := \{[\phi]; \quad \phi \in C^\infty_c(\X)\}
\end{equation*}
are dense in it (see \cite[Theorem 2.1]{Ortner2012}). \begin{definition}\label{def:Dual Homogeneous Sobolev}
    We define the \emph{dual homogeneous Sobolev space} $\dot{W}^{-1,p}(\X)$ to be the topological dual of $\dot{W}^{1,p'}(\X)$ equipped with the \emph{strong dual homogeneous Sobolev norm}. For a function $h$ with $\int h = 0$, by density,
    \begin{align*}
        \norm{h}_{\dot{W}^{-1, p}(\X)} &:= \sup\left\{\int_\X h\,[g] \: dx; \quad g \in \dot{W}^{1,p'}(\X), \quad \norm{[g]}_{\dot{W}^{1,p'}(\X)} \le 1 \right\} \\
            &= \sup\left\{\int_\X h\,[\phi] \: dx; \quad \phi \in \dot{\D}(\X),\quad \norm{[\phi]}_{\dot{W}^{1,p'}(\X) \le 1} \right\}.
    \end{align*}
\end{definition}

First, we extend this connection for densities to $L^p$. Using the machinery of Helmholtz-Weyl decomposition, we generalize \cite[Lemma 2.10]{Loep} into the following:
\begin{lemma}\label{thm:Delta U}
    Let $\rho_1, \rho_2 \in L^\infty(\X)$ be two probability measures, and let $U_i$ satisfy $\sigma\Delta U_i = \rho_i$ for $\X = \Rd$, or $\sigma\Delta U_i = \rho_i - 1$ for $\X = \Td$, with $i = 1,2, \sigma = \pm 1$, in the distributional sense. Let $1 < p < +\infty$. Then there is a constant $C_{\HW} > 0$ that only depends on $p$ and $\X$ such that
    \begin{equation}\label{eq:L^p dot W^-1,p estimate}
         \norm{\nabla U_1 - \nabla U_2}_{L^p(\X)} \le C_{\HW}\norm{\rho_1 - \rho_2}_{\dot{W}^{-1,p}(\X)}.
    \end{equation}
\end{lemma}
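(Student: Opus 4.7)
The plan is to use $L^p$-$L^{p'}$ duality together with the Helmholtz–Weyl decomposition (Theorem~\ref{thm:HW}) applied to the test vector fields, exploiting the gradient structure of $\nabla U_1-\nabla U_2$.

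First I would represent
\begin{equation*}
    \norm{\nabla U_1 - \nabla U_2}_{L^p(\X)} = \sup\left\{\int_\X (\nabla U_1 - \nabla U_2) \cdot \psi \: dx;\quad \psi \in C^\infty_c(\X;\Rd),\quad \norm{\psi}_{L^{p'}(\X)} \le 1\right\}
\end{equation*}
by density, where $p'$ is the conjugate Hölder exponent. Given such a $\psi$, I would apply Theorem~\ref{thm:HW} to write $\psi = \psi_0 + \nabla w$ with $\psi_0 := P_{p'}(\psi) \in H_{p'}(\X)$ and $\nabla w = (\Id - P_{p'})\psi \in G_{p'}(\X)$, where $w \in W^{1,p'}_{\loc}(\X)$. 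The bound \eqref{eq:HW proj} then yields $\norm{\nabla w}_{L^{p'}(\X)} \le (1 + C_{P_{p'}})\norm{\psi}_{L^{p'}(\X)}$.

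The key observation is that the divergence-free part $\psi_0$ contributes nothing: since $\nabla U_1 - \nabla U_2 = \nabla(U_1-U_2)$ is a gradient and $\divergence \psi_0 = 0$, an integration by parts (after approximating $\psi_0$ by elements of $\{u \in C^\infty_c(\X);\ \divergence u = 0\}$, which are dense in $H_{p'}(\X)$ by definition) yields
\begin{equation*}
    \int_\X (\nabla U_1 - \nabla U_2)\cdot \psi_0 \: dx = -\int_\X (U_1 - U_2)\,\divergence \psi_0 \: dx = 0.
\end{equation*}
Hence only the gradient part $\nabla w$ survives. Integrating by parts once more and using $\Delta(U_1-U_2) = \sigma(\rho_1 - \rho_2)$ (where the constant $-1$ in the torus case disappears in the difference), I obtain
\begin{equation*}
    \int_\X (\nabla U_1 - \nabla U_2)\cdot \nabla w \: dx = -\sigma\int_\X w\,(\rho_1-\rho_2) \: dx.
\end{equation*}
Since $\rho_1, \rho_2$ are probability measures, $\rho_1-\rho_2$ has zero mean, so the right-hand side only depends on the equivalence class $[w] \in \dot{W}^{1,p'}(\X)$. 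Applying Definition~\ref{def:Dual Homogeneous Sobolev} gives
\begin{equation*}
    \abs*{\int_\X w\,(\rho_1-\rho_2) \: dx} \le \norm{[w]}_{\dot{W}^{1,p'}(\X)} \norm{\rho_1 - \rho_2}_{\dot{W}^{-1,p}(\X)} = \norm{\nabla w}_{L^{p'}(\X)} \norm{\rho_1 - \rho_2}_{\dot{W}^{-1,p}(\X)},
\end{equation*}
and combining with the Helmholtz–Weyl bound on $\nabla w$ yields \eqref{eq:L^p dot W^-1,p estimate} with $C_{\HW} = 1 + C_{P_{p'}}$.

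The main technical obstacle will be justifying the two integrations by parts rigorously in both geometries: on $\Td$ there is no boundary, so it is routine up to density, but on $\Rd$ one must argue that $\nabla(U_1-U_2)\in L^p$, $w\in W^{1,p'}_{\loc}$ and $\psi_0, \nabla w$ together carry enough decay and smoothness for the formal computation to be licit. I expect this to be handled by approximating $\psi$ by compactly supported smooth fields, performing the decomposition and integration by parts on the approximants (where all terms are classical), and then passing to the limit using \eqref{eq:HW proj} together with the $L^p$-integrability of $\nabla U_1 - \nabla U_2$ (which follows from $\rho_i \in L^\infty$ and standard elliptic/Calderón–Zygmund regularity). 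A secondary delicate point is verifying that $w$ indeed defines an element of $\dot W^{1,p'}(\X)$ so that the pairing with the zero-mean distribution $\rho_1-\rho_2$ is meaningful in the sense of Definition~\ref{def:Dual Homogeneous Sobolev}.
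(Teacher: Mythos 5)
Your proposal is correct. For $\X=\Td$ it is essentially the paper's argument in a different order: you start from the $L^p$--$L^{p'}$ duality for $\nabla(U_1-U_2)$, split the test field $\psi=\psi_0+\nabla w$ via Helmholtz--Weyl, kill the solenoidal part by integration by parts, and contract the gradient part against $\rho_1-\rho_2$, getting $C_{\HW}=1+C_{P_{p'}}$. The paper does exactly this on the torus.

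The genuine difference is in the whole-space case. The paper does \emph{not} use the Helmholtz--Weyl decomposition on $\Rd$; instead it writes $U_1-U_2=\sigma G_d*(\rho_1-\rho_2)$ with $G_d$ the Newtonian kernel, tests component-wise, moves the convolution onto the test function $\varphi$, and then applies the Calder\'on--Zygmund inequality to bound $\norm{\nabla(\partial_{x_j}G_d*\varphi)}_{L^{p'}(\Rd)}\le C\norm{\varphi}_{L^{p'}(\Rd)}$. Your unified HW route avoids the explicit kernel and CZ, at the price of the technical points you flag yourself: $\nabla w$ is no longer compactly supported, so the integration by parts against $\nabla(U_1-U_2)$ has to go through the density of $\dot\D(\Rd)$ in $\dot W^{1,p'}(\Rd)$ (which the paper records in Definition~\ref{def:Homogeneous Sobolev}, citing Ortner), and the pairing with $\rho_1-\rho_2$ has to be read as the duality pairing on $\dot W^{-1,p}$ rather than a Lebesgue integral in general. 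These are exactly the kind of details the paper sidesteps on $\Rd$ by working with the convolution representation, where every integral is a classical convolution of integrable objects. Your route is arguably cleaner in that it treats $\Td$ and $\Rd$ by one mechanism and produces the same explicit constant in both cases; the paper's route on $\Rd$ is more concrete and keeps all approximations at the level of compactly supported smooth $\varphi$ from the start.

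One small precision worth adding to your writeup: it is not $\psi$ that needs approximating (it is already in $C^\infty_c$), but the two pieces $\psi_0$ and $\nabla w$ separately — $\psi_0$ by compactly supported divergence-free fields (by definition of $H_{p'}$) and $\nabla w$ by $\nabla\phi_k$ with $\phi_k\in C^\infty_c$ (by density of $\dot\D$ in $\dot W^{1,p'}$). With that clarification the argument is complete.
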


Second, we adapt Loeper's argument of the $L^2$-estimate \cite[Theorem 2.9]{Loep}(see also \cite[Proposition  1.1]{Santambrogio23} in bounded  convex domains) relating negative homogeneous Sobolev norms to Wasserstein distances with this new link on force fields to get the new $L^p$-estimate allowing us to generalize stability estimates;
\begin{proposition}\label{thm:L^p estimate}
    Let $\rho_1, \rho_2 \in L^\infty(\X)$ be two probability measures, and let $U_i$ satisfy $\sigma\Delta U_i = \rho_i$ for $\X = \Rd$, or $\sigma\Delta U_i = \rho_i - 1$ for $\X = \Td$, with $i = 1,2,$ $\sigma = \pm 1$, in the distributional sense. Let $1 < p < +\infty$. Then there is a constant $C_{\HW} > 0$ that only depends on $p$ and $\X$ such that
    \begin{equation}\label{eq:L^p force field estimate}
         \norm{\nabla U_1 - \nabla U_2}_{L^p(\X)} \le C_{\HW} \max \left\{ \norm{\rho_1}_{L^\infty(\X)}, \norm{\rho_2}_{L^\infty(\X)} \right\}^{1/p'}W_p(\rho_1, \rho_2).
    \end{equation}
\end{proposition}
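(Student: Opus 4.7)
The plan is to combine Lemma \ref{thm:Delta U} with a Benamou--Brenier type argument that links the dual homogeneous Sobolev norm of $\rho_1-\rho_2$ to the $p$-Wasserstein distance. Indeed, since the bound \eqref{eq:L^p dot W^-1,p estimate} already gives
\begin{equation*}
    \norm{\nabla U_1 - \nabla U_2}_{L^p(\X)} \le C_{\HW}\,\norm{\rho_1-\rho_2}_{\dot{W}^{-1,p}(\X)},
\end{equation*}
the task reduces to establishing the intermediate estimate
\begin{equation*}
    \norm{\rho_1-\rho_2}_{\dot{W}^{-1,p}(\X)} \le \max\{\norm{\rho_1}_{L^\infty(\X)},\norm{\rho_2}_{L^\infty(\X)}\}^{1/p'}\,W_p(\rho_1,\rho_2),
\end{equation*}
which is the genuine generalization of Loeper's $p=2$ bound.

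To prove this intermediate inequality I would argue by duality. Fix a representative $\phi \in C_c^\infty(\X)$ of a class in $\dot{\D}(\X)$ with $\norm{\nabla\phi}_{L^{p'}(\X)}\le 1$. Since $\rho_1,\rho_2$ are in $L^\infty$ and $p$-integrable, the Gangbo--McCann theorem for strictly convex costs produces an optimal transport map $T$ from $\rho_1$ to $\rho_2$ for the cost $\abs{x-y}^p$. Introduce the displacement interpolation $\rho_t := ((1-t)\Id + tT)_\#\rho_1$ together with the associated velocity field $v_t$ determined by $v_t\circ((1-t)\Id+tT) = T-\Id$. Standard computations show that $(\rho_t,v_t)$ solves the continuity equation $\partial_t\rho_t + \divergence(\rho_t v_t)=0$ in the distributional sense, with $\int_\X\abs{v_t}^p\rho_t\,dx = W_p(\rho_1,\rho_2)^p$ for a.e. $t\in(0,1)$ (Benamou--Brenier formula for $p$-Wasserstein geodesics).

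Testing the continuity equation against $\phi$ and integrating in time yields
\begin{equation*}
    \int_\X \phi\,(\rho_2-\rho_1)\,dx = \int_0^1\!\!\int_\X \nabla\phi\cdot v_t\,\rho_t\,dx\,dt,
\end{equation*}
to which I would apply H\"older's inequality with conjugate exponents $p$ and $p'$ in the space integral:
\begin{equation*}
    \abs*{\int_\X \nabla\phi\cdot v_t\,\rho_t\,dx} \le \left(\int_\X\abs{\nabla\phi}^{p'}\rho_t\,dx\right)^{1/p'}\left(\int_\X\abs{v_t}^p\rho_t\,dx\right)^{1/p}.
\end{equation*}
The second factor equals $W_p(\rho_1,\rho_2)$. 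The first factor I would control by $\norm{\rho_t}_{L^\infty(\X)}^{1/p'}\norm{\nabla\phi}_{L^{p'}(\X)}\le \norm{\rho_t}_{L^\infty(\X)}^{1/p'}$; taking the supremum over $\phi$ and integrating in $t$ then delivers the claimed dual estimate.

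The hard part is clearly the uniform displacement $L^\infty$ bound $\norm{\rho_t}_{L^\infty(\X)}\le \max\{\norm{\rho_1}_{L^\infty(\X)},\norm{\rho_2}_{L^\infty(\X)}\}$ for every $t\in[0,1]$. For $p=2$ this is the classical consequence of McCann's displacement convexity applied to $\int\rho^q$ with $q\to\infty$, using that the Brenier map is the gradient of a convex function. For general $p\in(1,\infty)$, I would invoke the extension of displacement convexity to strictly convex costs (cf. the work of Agueh, Cordero-Erausquin--McCann--Schmuckenschl\"ager), which still yields the convexity of $t\mapsto\int F(\rho_t)\,dx$ for admissible $F$, and hence the desired $L^\infty$ bound along the $p$-Wasserstein geodesic on $\Td$ or $\Rd$. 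Once this is in hand, the combination with \eqref{eq:L^p dot W^-1,p estimate} produces \eqref{eq:L^p force field estimate} with $C_{\HW}$ as in Lemma \ref{thm:Delta U}.
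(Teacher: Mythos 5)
Your proof is correct and is, in substance, the same argument the paper uses: reduce to the intermediate estimate $\norm{\rho_1-\rho_2}_{\dot W^{-1,p}(\X)} \le \max\{\norm{\rho_1}_{L^\infty},\norm{\rho_2}_{L^\infty}\}^{1/p'}W_p(\rho_1,\rho_2)$, prove it by differentiating $\int\phi\,d\rho_t$ along the Gangbo--McCann displacement interpolation, apply H\"older on the resulting integrand, control the interpolant with the $L^\infty$ bound on $\rho_t$, and feed the result into Lemma~\ref{thm:Delta U}. The only cosmetic difference is in the framing: the paper differentiates the pushforward formula directly (Lebesgue dominated convergence, H\"older with respect to $\rho_1$, then recognizing the $\rho_\theta$-integral), whereas you phrase the same step as testing the continuity equation against $\phi$ and invoking the Benamou--Brenier identity $\int|v_t|^p\rho_t\,dx = W_p^p$; both computations are the same after the change of variables $y=(1-t)x+tT(x)$. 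One small caveat: the paper's remark on Benamou--Brenier says that Loeper's \emph{alternative} proof---the one that tests the continuity equation against $\partial_\theta U_\theta$ itself to obtain the force-field estimate directly---does not generalize to $L^p$; your use of a generic $\phi\in C_c^\infty$ followed by Lemma~\ref{thm:Delta U} is not that argument, so there is no conflict. For the $L^\infty$ displacement bound the paper cites Santambrogio's \cite[Remark~8]{Santambrogio2009}, which is designed exactly for the $|\cdot|^p$-convex setting; your appeal to displacement convexity for strictly convex costs reaches the same conclusion.
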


\noindent{\bf (2) Loeper's stability estimate in $W_p$.}
Loeper noted \cite[Lemma 3.6]{Loep} that both the Wasserstein distance of order two of the solutions and of the associated densities are bounded by a flow quantity $Q$ given by
\begin{equation*}
    Q(t) := \int_{\Rd \times \Rd} \abs{X_1(t,x,v) - X_2(t,x,v)}^2 + \abs{V_1(t,x,v) - V_2(t,x,v)}^2 \: df^0(x, v), 
\end{equation*}
and the bounds read as
\begin{equation}\label{eq:WQ estimate}
    W_2^2(f_1(t), f_2(t)) \le Q(t), \quad W_2^2(\rho_{f_1}(t), \rho_{f_2}(t))  \le Q(t).
\end{equation}
Loeper uses the quantity $Q(t)$ together with the $L^2$-estimate on the force fields to prove the stability estimate \cite[Theorem 1.2]{Loep} leading to the uniqueness of weak solutions. By modifying the quantity $Q(t)$ to
\begin{align*}
    Q_p(t) &:= \int_{(\X \times \Rd)^2} \abs{X_1(t,x,v) - X_2(t,y,w)}^p + \abs{V_1(t,x,v) - V_2(t,y,w)}^p \: d\pi_0(x,v,y,w) \\
        &= \int_{(\X \times \Rd)^2} \abs{x-y}^p + \abs{v - w}^p \: d\pi_t(x,v,y,w),
\end{align*}
where $\pi_t \in \Pi(f_1(t), f_2(t))$ and $\pi_0$ is an optimal $W_p$ coupling (see \cite[Section 4]{GPI-WP} for a construction of $\pi_t$), we are able to generalise Loeper's stability estimate \cite[Theorem 1.2]{Loep}, and \cite[Theorem 3.1]{HKI2} both on the torus $\Td$ and on the whole space $\Rd$, to any Wasserstein distance of order $p$, with $ 1 < p < +\infty$;
\begin{theorem}\label{thm:Loeper's W_p}
    Let $f_1, f_2$ be two weak solutions to the Vlasov-Poisson system on $\X$ with respective densities
    \begin{equation*}
        \rho_{f_1} := \int_{\Rd} f_1 \: dv, \quad \rho_{f_2} := \int_{\Rd} f_2 \: dv. 
    \end{equation*}
    Let $1 < p < +\infty$, and set
    \begin{equation}\label{eq:A Loeper}
        A(t) := \norm{\rho_{f_2}(t)}_{L^\infty(\X)} + \norm{\rho_{f_1}(t)}_{L^\infty(\X)}^{1/p}\max \left\{ \norm{\rho_{f_1}(t)}_{L^\infty(\X)}, \norm{\rho_{f_2}(t)}_{L^\infty(\X)} \right\}^{1/p'},
    \end{equation}
    which is assumed to be in $L^1([0, T))$ for some $T > 0$. Then there is a constant $C_{\L} > 0$ that only depends on $p$ and $d$ such that if $W_p^p(f_1(0), f_2(0))$ is sufficiently small so that $W_p^p(f_1(0), f_2(0)) \le (4\sqrt{d}/e)^p$ and
    \begin{equation}\label{eq:condition on initial W_p}
        \abs{\log\left(\frac{W_p^p(f_1(0), f_2(0))}{\left(4\sqrt{d}\right)^p}\right)} \ge p\exp\left(-C_{\L}\int_0^T A(s) \: ds\right)
    \end{equation}
    then
    \begin{equation}\label{eq:Loeper's W_p stability estimate}
        W_p^p(f_1(t), f_2(t)) \le \left(4\sqrt{d}\right)^p \exp\left\{ \log\left(\frac{W_p^p(f_1(0), f_2(0))}{\left(4\sqrt{d}\right)^p}\right)\exp\left( C_{\L}\int_0^t A(s) \: ds\right)\right\}.
    \end{equation}
\end{theorem}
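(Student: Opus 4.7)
The strategy is to derive an Osgood-type differential inequality for $Q_p(t)$, which dominates $W_p^p(f_1(t),f_2(t))$ since the pushforward $(Z_1(t),Z_2(t))_\#\pi_0$ is a coupling of $f_1(t)$ and $f_2(t)$. Differentiating $Q_p$ along the characteristic system $\dot X_i=V_i$, $\dot V_i=-\nabla U_i(X_i)$ produces a kinetic cross-term $p\int|X_1-X_2|^{p-2}(X_1-X_2)\cdot(V_1-V_2)\,d\pi_0$, controlled by $p\,Q_p(t)$ via Young's inequality, together with a force-field term that, after adding and subtracting $\nabla U_2(X_1)$, splits into a \emph{density-mismatch} part $\nabla U_2(X_1)-\nabla U_1(X_1)$ and a \emph{position-mismatch} part $\nabla U_2(X_2)-\nabla U_2(X_1)$.

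For the density-mismatch part, H\"older's inequality, the identity $(X_1)_\#\pi_0=\rho_{f_1}(t)$, Proposition~\ref{thm:L^p estimate}, and the obvious bound $W_p^p(\rho_{f_1},\rho_{f_2})\le Q_p(t)$ (obtained by pushing $\pi_0$ through the spatial projection) yield a contribution $\lesssim A(t)\,Q_p(t)$. For the position-mismatch part we invoke the classical log-Lipschitz bound
\[
|\nabla U_2(x)-\nabla U_2(y)|\le C\|\rho_{f_2}\|_\infty\,|x-y|\big(1+|\log|x-y||\big),
\]
valid on both $\Rd$ and $\Td$ for $L^\infty$-densities, and combine H\"older with Jensen's inequality for the concave map $r\mapsto r(1+|\log r|)$ to move the $|X_1-X_2|^p$-average through the logarithm; this gives a contribution $\lesssim A(t)\,Q_p(t)\big(1+\tfrac{1}{p}|\log(Q_p(t)/(4\sqrt d)^p)|\big)$.

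Adding everything produces an Osgood-type inequality
\[
Q_p'(t)\le C_L\,A(t)\,Q_p(t)\Big(1+\tfrac{1}{p}\big|\log(Q_p(t)/(4\sqrt d)^p)\big|\Big),
\]
valid as long as $Q_p(t)\le(4\sqrt d)^p$. The initial smallness $W_p^p(f_1(0),f_2(0))\le(4\sqrt d/e)^p$ places $Q_p(0)/(4\sqrt d)^p$ well inside the logarithmic regime, and the hypothesis \eqref{eq:condition on initial W_p} guarantees that this regime is preserved on $[0,T]$. A direct integration via the substitution $w=-\log(Q_p/(4\sqrt d)^p)$ then delivers the double-exponential estimate \eqref{eq:Loeper's W_p stability estimate}, from which the conclusion follows using $W_p^p(f_1(t),f_2(t))\le Q_p(t)$.

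The most delicate step is the treatment of the position-mismatch term: for $p\neq 2$, the expression $|V_1-V_2|^{p-1}|X_1-X_2|(1+|\log|X_1-X_2||)$ does not separate cleanly between $|V|$- and $|X|$-averages, so one must choose H\"older exponents and apply concavity carefully so that the logarithm ultimately appears only against an $|X_1-X_2|^p$-integral, where Jensen's inequality supplies the macroscopic $\log Q_p$ factor needed to close the Osgood argument.
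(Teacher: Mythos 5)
Your proposal matches the paper's intended argument: the paper only sketches this proof, deferring to the adaptation of Han-Kwan--Iacobelli's Theorem 3.1 with the modified quantity $Q_p$, the new $L^p$-force-field estimate of Proposition~\ref{thm:L^p estimate}, and the comparison $W_p^p \le Q_p$, and your filled-in derivation (differentiating $Q_p$ along characteristics, splitting the force term into density- and position-mismatch parts, Hölder plus $(X_1)_\#\pi_0 = \rho_{f_1}(t)$ plus Proposition~\ref{thm:L^p estimate} for the former, log-Lipschitz plus Jensen via the concave $\Phi_p$ for the latter, then Osgood) is exactly that. The only minor point is that your stated concave map $r\mapsto r(1+|\log r|)$ should really be (a constant multiple of) $r\mapsto r\,\log^p\!\big((4\sqrt{d})^p/r\big)$ so that the $p$-th root after Hölder produces the single logarithm; but you flag this exponent bookkeeping yourself and the outcome is the same.
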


\noindent{\bf (3) An improved $W_p$ stability estimate via kinetic Wasserstein distance.}
Due to the anisotropy between position and momentum variables, we use an adapted Wasserstein distance designed for kinetic problems taking this into account as introduced in \cite[Section 4]{Iacobelli2022}:
\begin{definition}
    Let $\mu, \nu$ be two probability measures on $\X \times \Rd$. The \emph{kinetic Wasserstein distance} of order $p$, with $p \ge 1$, between $\mu$ and $\nu$ is defined as
    \begin{equation*}
        W_{\lambda, p}(\mu, \nu) := \left(\inf_{\pi \in \Pi(\mu, \nu)} D_p(\pi, \lambda)\right)^{1/p},
    \end{equation*}
    where $D_p(\pi, \lambda)$ is the unique number $s$ such that
    \begin{equation*}
        s - \lambda(s)\int_{(\X \times \Rd)^2} \abs{x - y}^p \: d\pi(x, y, v, w) = \int_{(\X \times \Rd)^2} \abs{v - w}^p \: d\pi(x, y, v, w),
    \end{equation*}
    with $\lambda : \R^+ \to \R^+$ a decreasing function.
\end{definition}
We consider the quantity $D_p(t)$ for $\pi_t$ and $\lambda(t) = \abs{\log D_p(t)}^{p/2}$ (see \cite[Lemma 3.7]{Iacobelli2022} for the proof of existence) given by
\begin{align*}
    D_p(t) &:= \frac{1}{p}\int_{(\X\times\Rd)^2} \lambda(t)\abs{X_1(t, x, v) - X_2(t, y, w)}^p + \abs{V_1(t, x, v) - V_2(t, y, w)}^p \: d\pi_0(x, v, y, w) \\
        &= \frac{1}{p} \int_{(\X\times\Rd)^2} \lambda(t)\abs{x - y}^p + \abs{v - w}^p \: d\pi_t(x, v, y, w).
\end{align*}
This quantity also compares to the usual Wasserstein distance $W_p$ as does $Q_p(t)$, and this allows us to generalize the recent Iacobelli's stability estimate \cite[Theorem 3.1]{Iacobelli2022} to the following:
\begin{theorem}\label{thm:Iacobelli's W_p}
    Let $f_1, f_2$ be two weak solutions to the Vlasov-Poisson system on $\X$ (\ref{eq:VP Td}) with respective densities
    \begin{equation*}
        \rho_{f_1} := \int_{\Rd} f_1 \: dv, \quad \rho_{f_2} := \int_{\Rd} f_2 \: dv. 
    \end{equation*}
    Let $1 < p < +\infty$, and set
    \begin{equation}\label{eq:A Iacobelli}
        A(t) := \norm{\rho_{f_2}(t)}_{L^\infty(\X)} + \norm{\rho_{f_1}(t)}_{L^\infty(\X)}^{1/p}\max \left\{ \norm{\rho_{f_1}(t)}_{L^\infty(\X)}, \norm{\rho_{f_2}(t)}_{L^\infty(\X)} \right\}^{1/p'},
    \end{equation}
    which is assumed to be in $L^1([0, T))$ for some $T > 0$. Then there is a universal constant $c_0 > 0$ and a constant $C_{\KW} > 0$ that depends only on $p$ and $d$ such that if $W_p^p(f_1(0), f_2(0))$ is sufficiently small so that $W_p^p(f_1(0), f_2(0)) \le pc_0$ and
    \begin{equation}\label{eq:kinetic condition on initial W_p}
        \sqrt{\abs{\log\Bigg\{W_p^p(f_1(0), f_2(0))\abs{\log\left(\frac{1}{p}W_p^p(f_1(0), f_2(0))\right)}\Bigg\}}} \ge C_{\KW}\int_0^T A(s) \: ds + 1,
    \end{equation}
    then
    \begin{multline}\label{eq:Iacobelli's W_p stability estimate}
        W_p^p(f_1(t), f_2(t)) \\ \le p\exp\Bigg\{-\left(\sqrt{\abs{\log\Bigg\{W_p^p(f_1(0), f_2(0))\abs{\log\left(\frac{1}{p}W_p^p(f_1(0), f_2(0))\right)}\Bigg\}}} - C_{\KW}\int_0^t A(s) \: ds\right)^{2}\Bigg\}.
    \end{multline}
\end{theorem}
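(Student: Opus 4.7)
The plan is to derive and integrate a differential inequality for the kinetic flow quantity $D_p(t)$, then convert back to $W_p$ via $W_p^p(f_1(t), f_2(t)) \le p D_p(t)$, which holds once $\lambda(t) = \abs{\log D_p(t)}^{p/2} \ge 1$. The existence of such a $D_p(t)$ is guaranteed by \cite[Lemma 3.7]{Iacobelli2022}.

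Differentiating $D_p(t)$ along the characteristic flow using $\dot X_i = V_i$ and $\dot V_i = -\nabla U_i(X_i)$ yields three contributions: a cross term $\int \lambda(t)\abs{X_1-X_2}^{p-2}(X_1-X_2)\cdot(V_1-V_2)\,d\pi_0$, a force term $-\int\abs{V_1-V_2}^{p-2}(V_1-V_2)\cdot(\nabla U_1(X_1)-\nabla U_2(X_2))\,d\pi_0$, and an implicit $\dot\lambda$-term $\frac{\dot\lambda}{p}\int\abs{X_1-X_2}^p\,d\pi_0$. Since $\int\abs{X_1-X_2}^p\,d\pi_0 \le pD_p/\lambda$ and $\dot\lambda/\lambda = p\dot D_p/(2 D_p \abs{\log D_p})$, this last term contributes at most $\frac{p\,\dot D_p}{2\abs{\log D_p}}$, which can be absorbed into the LHS once $D_p$ is small.

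The cross term is controlled by H\"older with conjugate exponents $p', p$ combined with the bound $\int\abs{X_1-X_2}^p\,d\pi_0 \le pD_p/\lambda$, yielding at most $pD_p\,\lambda^{1/p} = pD_p\abs{\log D_p}^{1/2}$. For the force term I would decompose
\[
    \nabla U_1(X_1) - \nabla U_2(X_2) = [\nabla U_2(X_1)-\nabla U_2(X_2)] + [\nabla U_1(X_1)-\nabla U_2(X_1)].
\]
For the first summand I would apply the pointwise log-Lipschitz estimate $\abs{\nabla U_2(x)-\nabla U_2(y)} \lesssim \norm{\rho_{f_2}}_{L^\infty}\abs{x-y}(1+\abs{\log\abs{x-y}})$, then H\"older, and a Jensen-type inequality exploiting the concavity of $t \mapsto t(1+\abs{\log t})^p$ near the origin, to obtain a bound $\lesssim \norm{\rho_{f_2}}_{L^\infty}D_p\abs{\log D_p}^{1/2}$. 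For the second summand I would push forward by $X_1$ to rewrite
\[
    \int\abs{\nabla U_1(X_1)-\nabla U_2(X_1)}^p \,d\pi_0 = \int \abs{\nabla U_1 - \nabla U_2}^p\,d\rho_{f_1}(t) \le \norm{\rho_{f_1}}_{L^\infty}\norm{\nabla U_1-\nabla U_2}_{L^p}^p,
\]
and invoke Proposition \ref{thm:L^p estimate} together with $W_p(\rho_{f_1}, \rho_{f_2}) \le (\int\abs{X_1-X_2}^p \,d\pi_0)^{1/p}$ to derive a bound $\lesssim \norm{\rho_{f_1}}_{L^\infty}^{1/p}\max\{\norm{\rho_{f_1}}_{L^\infty},\norm{\rho_{f_2}}_{L^\infty}\}^{1/p'}D_p\abs{\log D_p}^{-1/2}$, which is dominated by the first.

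Assembling the pieces yields $\dot D_p(t) \le C_{\KW}(A(t)+1)D_p(t)\abs{\log D_p(t)}^{1/2}$, which, upon observing that $\frac{d}{dt}\sqrt{\abs{\log D_p}} = -\dot D_p/(2D_p\sqrt{\abs{\log D_p}})$, becomes $\frac{d}{dt}\sqrt{\abs{\log D_p(t)}} \ge -\frac{C_{\KW}}{2}(A(t)+1)$. Integrating, squaring, and exponentiating produces $D_p(t) \le \exp(-(\sqrt{\abs{\log D_p(0)}} - C'\int_0^t (A(s)+1)\,ds)^2)$, valid under the smallness assumption which keeps the argument of the square positive. Converting via $W_p^p \le pD_p$ and comparing $\abs{\log D_p(0)}$ to $\abs{\log\{W_p^p(f_1(0),f_2(0))\abs{\log(W_p^p(f_1(0),f_2(0))/p)}\}}$ via the definitional bound $D_p(0) \le (\lambda(0)+1)W_p^p(f_1(0), f_2(0))/p$ delivers the announced form.

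The main obstacle will be the Jensen-type concavity step for $t\mapsto t(1+\abs{\log t})^p$ with general $p\neq 2$, together with the careful tracking of the powers $\lambda^{\pm 1/p}$ and $\lambda^{\pm 1/p'}$ so that the logarithmic exponent in the final ODE collapses to exactly $1/2$ (which is what produces the square root in the statement); losing this exponent would destroy the double-log structure and weaken the stability estimate.
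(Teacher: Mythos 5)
Your proposal follows essentially the same route as the paper: differentiate $D_p$, split the force-field difference into a log-Lipschitz piece (handled by Jensen on the concave profile the paper calls $\Phi_p$) and an $L^p$ piece (handled by Proposition~\ref{thm:L^p estimate} plus the coupling bound on $W_p(\rho_{f_1},\rho_{f_2})$), arrive at $\dot D_p \lesssim A(t)D_p\sqrt{\abs{\log D_p}}$, and integrate via $\sqrt{\abs{\log D_p}}$. One small slip: with $\lambda = \abs{\log D_p}^{p/2}$ and $D_p<1$ one has $\dot\lambda/\lambda = -\tfrac{p}{2}\dot D_p/(D_p\abs{\log D_p})$ (note the sign), so when $\dot D_p>0$ the $\dot\lambda$-term is simply nonpositive and can be dropped rather than absorbed, which is exactly what the paper does.
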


The improvement of this stability estimate (\ref{eq:Iacobelli's W_p stability estimate}) of Theorem \ref{thm:Iacobelli's W_p} via $p$-kinetic Wasserstein distance compared to Loeper's stability estimate in $W_p$ (\ref{eq:Loeper's W_p stability estimate}) of Theorem \ref{thm:Loeper's W_p} lies in the order of magnitude of the time interval in which the two solutions are close to each other in Wasserstein distance. Indeed, if $W_p^p(f_1(0), f_2(0)) = \delta \ll 1$, then Loeper's stability estimate yields $W_p^p(f_1(t), f_2(t)) \lesssim 1$ for $t \in [0, \log(\abs{\log \delta})]$ while the kinetic stability estimates yields a better control of the time interval; $W_p^p(f_1(t), f_2(t)) \lesssim 1$ for $t \in [0, \sqrt{\abs{\log \delta}}]$.

\section{A new $L^p$-estimate via the Helmholtz-Weyl decomposition for $1 < p < +\infty$}

\subsection{Proof of the $L^p$-estimate}

\begin{proof}[Proof of Lemma \ref{thm:Delta U}]
    Let $[\phi] \in \dot{\D}(\X)$ be a quotient test function. Note that $\int \rho_1 - \rho_2 = 0$ as both $\rho_1$ and $\rho_2$ are probability measures, an integration by parts yields
    \begin{equation*}
        \int_{\X} \left[\phi\right] \left(\rho_1 - \rho_2\right) \: dx = \int_{\X} \phi \left(\rho_1 - \rho_2\right) \: dx = -\sigma\int_\X \nabla \phi \cdot \left(\nabla U_1 - \nabla U_2\right) \: dx.
    \end{equation*}
    First, we consider the torus case $\X = \Td$: We use the Helmholtz-Weyl decomposition given by Theorem \ref{thm:HW} to write any $\Rd$-valued test function $\Phi \in C_c^\infty(\Td)$ as $\Phi = \nabla\phi + g$, where $\nabla\phi \in G_{p'}(\Td)$ and $g \in H_{p'}(\Td)$ with $1/p + 1/p' = 1$. By definition, there is a divergence-free sequence of test functions $(g_k)_{k\in\N}$ whose $L^{p'}$-limit is $g$. By continuity of the force fields (see \cite[Lemma 3.2]{HKI2}), $\nabla U_1 - \nabla U_2 \in L^\infty(\Td)$, and in particular $\nabla U_1 - \nabla U_2 \in L^p(\Td)$. An integration by parts yields
    \begin{align*}
        \norm{\nabla U_1 - \nabla U_2}_{L^p(\Td)}
            &= \sup_{\norm{\Phi}_{L^{p'}(\Td)} \le 1} \left\{\int_{\Td} \Phi \cdot\left(\nabla U_1 - \nabla U_2 \right)\: dx \right\} \\ 
            &= \sup_{\norm{\Phi}_{L^{p'}(\Td)} \le 1} \left\{\int_{\Td} \nabla\phi \cdot\left(\nabla U_1 - \nabla U_2\right)\: dx - \lim_{k\to\infty} \int_{\Td} \divergence g_k \left(U_1 -  U_2\right)\: dx \right\} \\
            &= \sup_{\phi; \: \norm{\Phi}_{L^{p'}(\Td)} \le 1} \left\{\int_{\Td} \nabla\phi \cdot\left(\nabla U_1 - \nabla U_2\right)\: dx\right\}.
    \end{align*}
    Since the projection operator $P_{p'} : \Phi \,\mapsto\, g$ is bounded from $L^{p'}(\Td)$ to $H_{p'}(\Td)$, we have that
    \begin{equation*}
        \norm{\nabla\phi}_{L^{p'}(\Td)} = \norm{g - \Phi}_{L^{p'}(\Td)} \le \left(1 + C_{P_{p'}}\right)\norm{\Phi}_{L^{p'}(\Td)},
    \end{equation*}
    where $C_{P_{p'}}$ is the constant from (\ref{eq:HW proj}), and we set $C_{\HW} := 1 + C_{P_{p'}}$. Consider the larger set
    \begin{equation*}
        \left\{\norm{\nabla\phi}_{L^{p'}(\Td)}/C_{\HW} \le 1\right\} \supset \left\{\phi; \: \norm{\nabla\phi}_{L^{p'}(\Td)}/C_{\HW} \le \norm{\Phi}_{L^{p'}(\Td)} \le 1\right\}
    \end{equation*}
    that does not depend on $\Phi$ anymore. By replacing the supremum over this set, we obtain
    \begin{align*}
        \norm{\nabla U_1 - \nabla U_2}_{L^p(\Td)} &\le C_{\HW} \sup_{\norm{\nabla\phi}_{L^{p'}(\Td)} \le 1} \left\{\int_{\Td} \nabla\phi \cdot\left(\nabla U_1 - \nabla U_2\right)\: dx\right\} \\
            &= C_{\HW} \sup_{\norm{\nabla[\phi]}_{L^{p'}(\Td)} \le 1} \left\{\int_{\Td} [\phi]\left(\rho_1 - \rho_2\right) \: dx\right\}.
    \end{align*}
    We conclude by density of quotient test functions $\dot{\D}(\Td)$ in $\dot{W}^{1, p}(\Td)$ and by the definition of the strong dual homogeneous Sobolev norm.
    
    Second, we consider the whole space case $\X = \R^d$: Let $\varphi \in C_c^\infty(\R^d)$ be a test function and set $1/p + 1/p' = 1.$ We have that $U_1 - U_2 = \sigma G_d * (\rho_1 - \rho_2)$ a.e., where $G_d$ is the fundamental solution of the Laplace equation. Then, by symmetry of the convolution,
    \begin{align*}
        \norm{\partial_{x_j} U_1 - \partial_{x_j} U_2}_{L^p(\Rd)}
            &= \sup_{\norm{\varphi}_{L^{p'}(\Rd)} \le 1} \left\{\int_{\Rd} \varphi\, \partial_{x_j} G_d * \left(\rho_1 - \rho_2\right)\: dx \right\} \\
            &= \sup_{\norm{\varphi}_{L^{p'}(\Rd)} \le 1} \left\{\int_{\Rd} \left(\rho_1 - \rho_2\right) \partial_{x_j} G_d * \varphi\: dx \right\}.
    \end{align*}
    We denote $\phi = \partial_{x_j} G_d * \varphi$, and Calderon-Zygmund's inequality \cite[Theorem II.11.4]{Galdi2011} yields
    \begin{equation*}
        \norm{\nabla \phi}_{L^{p'}(\Rd)} \le C_{\HW} \norm{\varphi}_{L^{p'}(\Rd)}
    \end{equation*}
    for some constant $C_{\HW} > 0$ which only depends on $d$ and $p$, so that the supremum can be replaced by the larger set
        \begin{equation*}
            \left\{\phi \in \dot W^{1,p'}(\Rd);\: \norm{\nabla \phi}_{L^{p'}(\Rd)}/C_{\HW} \le 1\right\} \supset \left\{\varphi;\: \norm{\partial_{x_j} G_d * \varphi}_{L^{p'}(\Rd)}/C_{\HW} \le \norm{\varphi}_{L^{p'}(\Rd)} \le 1\right\}
        \end{equation*}
    independent of $\varphi$. We obtain
    \begin{align*}
        \norm{\partial_{x_j} U_1 - \partial_{x_j} U_2}_{L^{p'}(\Rd)} &\le C_{\HW}\sup_{\norm{\nabla \phi}_{L^{p'}(\Rd)} \le 1} \left\{\int_{\Rd} \phi \left(\rho_1 - \rho_2\right) \: dx\right\} \\
            &= C_{\HW}\sup_{\norm{\nabla \left[\phi\right]}_{L^{p'}(\Rd)} \le 1} \left\{\int_{\Rd} \left[\phi\right] \left(\rho_1 - \rho_2\right) \: dx \right\},
    \end{align*}
    and we conclude by definition of the strong dual homogeneous Sobolev norm.
\end{proof}

Before proving our new $L^p$-estimate, we first state the existence of an optimal transport map adapted to our context; 
\begin{theorem}[Gangbo-McCann {\cite[Theorem 1.2]{GangboMcCann1996}}]\label{thm:GM}
    Let $\rho_1, \rho_2$ be two probability measures on $\X$ that are absolutely continuous with respect to the Lebesgue measure. Then
    \begin{equation*}
        \displaystyle W_p(\rho_1, \rho_2) = \left(\inf_{T_\# \rho_1 = \rho_2} \bigg\{\int_{\X} \abs{x - T(x)}^p \: d\rho_1(x) \bigg\}\right)^{1/p},
    \end{equation*}
    where the infimum runs over all measurable mappings $T : \X \to \X$ that push forward $\rho_1$ onto $\rho_2$. Moreover, the infimum is reached by a $\rho_1(dx)$-almost surely unique mapping $T$, and there is a $\abs{\cdot}^p$-convex function $\psi$ such that $T = \Id_{\X} -  \left(\nabla\abs{\cdot}^p\right)^{-1} \circ \nabla\psi$, where we denote $\left(\nabla h^*\right)$ by $\left(\nabla h\right)^{-1}$ for a function $h$ with $h^*$ its Legendre transform.
\end{theorem}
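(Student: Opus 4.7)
The plan is to invoke Kantorovich duality with the cost $c(x,y):=\abs{x-y}^p$ and extract an optimal Monge map by exploiting the strict convexity of $z\mapsto\abs{z}^p$ for $p>1$. Since $c$ is continuous and bounded below, standard duality yields
\[
W_p^p(\rho_1,\rho_2)=\sup\Bigl\{\int_\X \varphi\,d\rho_1+\int_\X \varphi^c\,d\rho_2\Bigr\},
\]
where the supremum runs over $c$-concave potentials $\varphi$, with $c$-transform $\varphi^c(y):=\inf_{x}[c(x,y)-\varphi(x)]$; a maximizer $\varphi$ exists via Arzel\`a-Ascoli applied to a suitably normalised class of $c$-concave functions (which are locally Lipschitz since $c$ is smooth). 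Any optimal coupling $\pi\in\Pi(\rho_1,\rho_2)$ for $W_p$ is then concentrated on the $c$-superdifferential $\{(x,y):\varphi(x)+\varphi^c(y)=c(x,y)\}$.

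Next I would derive the first-order optimality condition on the support of $\pi$. For $\pi$-a.e.\ $(x,y)$, the function $x'\mapsto c(x',y)-\varphi(x')$ attains its minimum at $x$, so at every differentiability point of $\varphi$ one has
\[
\nabla\varphi(x)=\nabla_{x}c(x,y)=\nabla\abs{\cdot}^{p}(x-y).
\]
Because $p>1$, the map $z\mapsto\nabla\abs{z}^{p}=p\abs{z}^{p-2}z$ is a continuous bijection of $\Rd$, whose inverse coincides with the gradient of the Legendre transform, $(\nabla\abs{\cdot}^{p})^{-1}$. Inverting gives $y=x-(\nabla\abs{\cdot}^{p})^{-1}(\nabla\varphi(x))$, which suggests setting
\[
T(x):=x-(\nabla\abs{\cdot}^{p})^{-1}\bigl(\nabla\varphi(x)\bigr),\qquad \psi:=-\varphi,
\]
with $\psi$ interpreted as a $\abs{\cdot}^p$-convex function in Gangbo-McCann's sense.

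To turn $T$ into a bona fide transport map, I would use that $c$-concave functions for this smooth cost are locally semiconcave and therefore Lebesgue-a.e.\ differentiable by Alexandrov's theorem. Since $\rho_1\ll\mathcal{L}^{d}$, the map $T$ is $\rho_1$-a.e.\ well-defined, and because $\pi$ is supported in the graph of $T$, one obtains $\pi=(\Id_{\X},T)_{\#}\rho_1$, whence $T_{\#}\rho_1=\rho_2$ and $T$ realises the Monge infimum, so the Kantorovich and Monge values coincide. Uniqueness then follows from the linear structure of Kantorovich's problem: if $T_1,T_2$ were two optimal Monge maps, then $\tfrac12[(\Id_\X,T_1)_{\#}\rho_1+(\Id_\X,T_2)_{\#}\rho_1]$ would be an optimal coupling, which by the same argument is concentrated on a single graph, forcing $T_1=T_2$ $\rho_1$-a.e.

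The main obstacle is precisely this regularity step: passing from Lebesgue-a.e.\ differentiability of the $c$-concave potential $\varphi$ to a $\rho_1$-a.e.\ single-valuedness of its $c$-subdifferential, which crucially exploits the absolute continuity of $\rho_1$ together with a careful treatment of the set $\{\nabla\varphi=0\}$, where the nonlinear inverse $(\nabla\abs{\cdot}^{p})^{-1}$ fails to be differentiable. Once this is in place, the factorisation through the Legendre transform, which is the content of the $\abs{\cdot}^{p}$-convex representation $T=\Id_{\X}-(\nabla\abs{\cdot}^{p})^{-1}\circ\nabla\psi$, is immediate.
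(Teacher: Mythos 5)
The paper does not prove this statement; it is quoted verbatim as a black-box result from Gangbo--McCann \cite[Theorem~1.2]{GangboMcCann1996}, so there is no internal proof to compare your argument against. That said, your sketch is a recognisable outline of the standard modern route to the theorem (Kantorovich duality, first-order condition on the $c$-superdifferential, inversion via the Legendre transform, and then uniqueness by averaging two optimal plans), and is essentially the Gangbo--McCann strategy restricted to the cost $\abs{x-y}^p$.

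Two points deserve tightening, though, because the cost $\abs{\cdot}^p$ is not as regular as you claim when $1<p<2$. First, you assert that $c$-concave potentials are ``locally semiconcave by Alexandrov's theorem'' because the cost is ``smooth''; for $1<p<2$ the cost $\abs{x-y}^p$ is only $C^1$, not $C^{1,1}$, across the diagonal $x=y$, so semiconcavity of $\varphi$ fails in general and Alexandrov is neither applicable nor needed. What you actually need is only Lebesgue-a.e.\ first-order differentiability, which follows from Rademacher once you establish local Lipschitz continuity of the potential; this in turn requires a locally uniform bound on $\nabla_x c$ over the relevant set of $y$'s, which on $\X=\Rd$ with unbounded supports is a genuine step (Gangbo--McCann handle it via a growth condition or by first working on compact sets and passing to the limit). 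Second, your remark that $\nabla\abs{\cdot}^p$ is ``a continuous bijection of $\Rd$'' and that its inverse is $\nabla(\abs{\cdot}^p)^*$ is correct, but for $p\neq 2$ this inverse degenerates (loses Lipschitz-ness, and for $p>2$ also loses differentiability) at the origin; you correctly flag this as the delicate set $\{\nabla\varphi=0\}$. The resolution is that one only needs $T$ to be Borel and well-defined $\rho_1$-a.e., not differentiable, and the continuity of $(\nabla\abs{\cdot}^p)^{-1}$ suffices for that — but the sketch should say so explicitly rather than leaving it as an unaddressed ``obstacle''. With these two points repaired the argument matches the classical proof.
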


\begin{proof}[Proof of Proposition \ref{thm:L^p estimate}]
    Let us denote by
    \begin{equation*}
        \rho_\theta := \left[\left(\theta - 1\right) T + \left(2 - \theta\right)\Id_{\X}\right]_{\#}\rho_1
    \end{equation*}
    the interpolant measure between $\rho_1$ and $\rho_2$, where $T$ is the optimal transport map of Theorem \ref{thm:GM}. Let $\phi \in C_c^\infty(\X)$ be a test function. By the properties of pushforwards of measures, it follows immediately that
    \begin{equation*}
        \int_{\X} \phi(x) \: d\rho_\theta(x) = \int_{\X} \phi\left((\theta - 1)T(x) + (2 - \theta)x\right)\: d\rho_1(x).
    \end{equation*}
    Lebesgue's dominated convergence theorem yields
    \begin{equation*}
        \frac{d}{d\theta} \int_{\X} \phi(x)\: d\rho_\theta(x) = \int_{\X} \nabla\phi\left((\theta - 1)T(x) + (2 - \theta)x\right)\cdot \left(T(x) - x\right)\: d\rho_1(x).
    \end{equation*}
    Now, by using Hölder inequality with respect to the measure $\rho_1$, we get
    \begin{align*}
        \frac{d}{d\theta} \int_{\X} \phi(x)\rho_\theta(x)\: dx &\le \left(\int_{\X}\abs{\nabla\phi\left((\theta - 1)T(x) + (2 - \theta)x\right)}^{p'}\: d\rho_1(x)\right)^{1/p'} \left(\int_{\X}\abs{x - T(x)}^p\: d\rho_1(x)\right)^{1/p} \\
        &= \left(\int_{\X}\abs{\nabla\phi(x)}^{p'}\: d\rho_\theta(x)\right)^{1/p'} \left(\int_{\X}\abs{x - T(x)}^p\: d\rho_1(x)\right)^{1/p}.
    \end{align*}
    The second term in the product is exactly $W_p(\rho_1, \rho_2)$ by Theorem \ref{thm:GM}. For the first one, thanks to \cite[Remark 8]{Santambrogio2009}, the $L^\infty$-norm of the interpolant is controlled by the one of the two measures;
    \begin{equation*}
        \norm{\rho_\theta}_{L^\infty(\X)} \le \max\left\{\norm{\rho_1}_{L^\infty(\X)}, \norm{\rho_2}_{L^\infty(\X)}\right\}.
    \end{equation*}
    Therefore,
    \begin{equation*}
        \frac{d}{d\theta} \int_{\X} \phi(x)\rho_\theta(x)\: dx \le \max\left\{ \norm{\rho_1}_{L^\infty(\X)}, \norm{\rho_2}_{L^\infty(\X)} \right\}^{1/p'}\left(\int_{\X} \abs{\nabla\phi(x)}^{p'} \: dx\right)^{1/p'}W_p(\rho_1, \rho_2).
    \end{equation*}
    Combining the above estimate with the fact that $\int \rho_2 - \rho_1 = 0$ and Fubini's theorem yields
    \begin{align*}
        \int_{\X} [\phi](x)\left(\rho_2(x) - \rho_1(x)\right)\: dx &= \int_{\X} \phi(x)\left(\rho_2(x) - \rho_1(x)\right)\: dx \\
        &= \int_1^2\left(\frac{d}{d\theta} \int_{\X} \phi(x)\rho_\theta(x)\: dx\right) d\theta \\
        &\le \max\left\{ \norm{\rho_1}_{L^\infty(\X)}, \norm{\rho_2}_{L^\infty(\X)} \right\}^{1/p'}\left(\int_{\X} \abs{\nabla[\phi](x)}^{p'} \: dx\right)^{1/p'}W_p(\rho_1, \rho_2).
    \end{align*}
    By restricting to quotient test functions $[\phi]$ such that
    $\norm{\nabla[\phi]}_{L^{p'}(\X)} \le 1$, we get the strong dual homogeneous norm so that
    \begin{equation*}
        \norm{\rho_1 - \rho_2}_{\dot{W}^{-1,p}(\X)} \le \max\left\{ \norm{\rho_1}_{L^\infty(\X)}, \norm{\rho_2}_{L^\infty(\X)} \right\}^{1/p'}W_p(\rho_1, \rho_2),
    \end{equation*}
    and we conclude by Lemma \ref{thm:Delta U}.
\end{proof}
\begin{remark}
    Loeper uses extensively that the optimal transport map $T$ is convex to rely on the gas internal energy theory developed by McCann (see \cite[Section 2]{McCann1997}) to estimate the $L^\infty$-norm of the interpolant. Here, we only have $|\cdot|^p$-convexity instead, while still the $L^\infty$-estimate on the interpolant is valid as showed, for instance, by Santambrogio \cite[Remark 8]{Santambrogio2009}. Loeper gives also an alternative proof \cite[Proposition 3.1]{Loeper2005} using the Benamou-Brenier formula \cite[Proposition 1.1]{Benamou2000}. The interpolant measure satisfies the continuity equation
    \begin{equation*}
        \partial_\theta\rho_\theta + \divergence_x (\rho_\theta v_\theta) = 0
    \end{equation*}
    for a vector field $v_\theta$ related to the Wasserstein distance through
    \begin{equation*}
        \int_{\X} \abs{v_\theta (x)}^2 \: d\rho_\theta (x) = W_2^2(\rho_1, \rho_2).
    \end{equation*}
    Differentiating both sides of Poisson's equation gives
    \begin{equation*}
        \Delta\partial_\theta U_\theta = -\partial_\theta \rho_\theta = \divergence_x(\rho_\theta v_\theta),
    \end{equation*}
    and integrating by parts against $\partial_\theta U_\theta$ itself as test function yields
    \begin{equation*}
        \int_{\X} \abs{\partial_\theta \nabla U_\theta}^2 \: dx = \int_{\X} \rho_\theta v_\theta \cdot \partial_\theta \nabla U_\theta \: dx
    \end{equation*}
    so that
    \begin{equation*}
        \norm{\partial_\theta\nabla U_\theta}_{L^2(\X)} \le \norm{\rho_\theta}_{L^\infty(\X)}^{1/2} W_2(\rho_1, \rho_2) \le \max\left\{\norm{\rho_1}_{L^\infty(\X)}, \norm{\rho_2}_{L^\infty(\X)}\right\}^{1/2} W_2(\rho_1, \rho_2),
    \end{equation*}
    and the conclusion follows after integrating over $\theta \in [1, 2]$.

    Even though there is a $W_p$ version of Benamou-Brenier formula \cite[Theorem 5.28]{Santambrogio2015}, there is no analog test function that allows to mimic this proof for $L^p$. 
\end{remark}

\section{Stability estimates revisited for Wasserstein-like distances}

\subsection{Loeper's estimate revisited}
The proof of Loeper's stability estimate in $W_p$ on the torus $\Td$ is similar to \cite[Theorem 3.1]{HKI2} using the new $L^p$-estimate (\ref{eq:L^p force field estimate}) from Proposition \ref{thm:L^p estimate}. It relies on the modified quantity (see \cite[Section 4]{GPI-WP})
\begin{align*}
    Q_p(t) &:= \int_{(\TdRd)^2} \abs{X_1(t,x,v) - X_2(t,y,w)}^p + \abs{V_1(t,x,v) - V_2(t,y,w)}^p \: d\pi_0(x,v,y,w) \\
        &= \int_{(\TdRd)^2} \abs{x-y}^p + \abs{v -w}^p \: d\pi_t(x,v,y,w),
\end{align*}
where $\pi_t \in \Pi(f_1(t), f_2(t))$ and $\pi_0$ is an optimal coupling that satisfies the marginal property;
\begin{multline}\label{eq:marginal property}
    \int_{\left(\TdRd\right)^2} \phi(x, v, y, w) \: d\pi_t(x,v,y,w) \\ = \int_{\left(\TdRd\right)^2} \phi\left(Z_1(t, x, v), Z_2(t, y, w)\right) \: d\pi_0(x, v, y, w), \quad \forall \phi \in C_b((\TdRd)^2).
\end{multline}
The last ingredients are the following estimates analog to (\ref{eq:WQ estimate}) relying on the definition of Wasserstein distance (see \cite[Lemma 3.6]{Loep}):
\begin{equation}\label{eq:WQ_p estimate}
     W_p^p(f_1(t), f_2(t)) \le Q_p(t), \quad W_p^p(\rho_{f_1}(t), \rho_{f_2}(t))  \le Q_p(t).
\end{equation}

\subsection{Kinetic Wasserstein distance revisited}
We prove the recent Iacobelli's stability estimate in $W_p$ both on the torus and on the whole space adapting the proof of \cite[Theorem 3.1]{Iacobelli2022}.

The proof relies on the modified quantity from the kinetic Wasserstein distance (see \cite[Section 4]{Iacobelli2022})
\begin{align*}
    D_p(t) &:= \frac{1}{p}\int_{(\XRd)^2} \lambda(t)\abs{X_1(t, x, v) - X_2(t, y, w)}^p + \abs{V_1(t, x, v) - V_2(t, y, w)}^p \: d\pi_0(x, v, y, w) \\
        &= \frac{1}{p}\int_{(\XRd)^2} \lambda(t)\abs{x - y}^p + \abs{v - w}^p \: d\pi_t(x, v, y, w),
\end{align*}
where $\lambda(t) = \abs{\log D_p(t)}^{p/2}$, whose specific choice of comes from optimisation considerations that will become apparent in the proof. One is able to bound
\begin{equation*}
    \dot{D}_p(t) \lesssim \frac{\dot{\lambda}(t)}{\lambda}D_p(t) + D_p(t) \left( \lambda^{1/p}(t) + \lambda^{-1/p}(t) \log\left(\frac{pD_p(t)}{\lambda(t)}\right)\right)
\end{equation*}
which can be rewritten as
\begin{equation*}
    \dot{D}_p(t) \lesssim D_p(t) \Big[ \lambda^{1/p}(t) + \lambda^{-1/p}(t) \log\left(D_p(t)\right)\Big]
\end{equation*}
recalling that $\lambda$ is a decreasing function and assuming that $\abs{\log(pD_p(t)/\lambda(t))} \lesssim \abs{\log D_p(t)}$ in some regime. The term inside the square brackets is now optimised considering $D_p(t)$ as a function of $\lambda(t)$.

We recall the Log-Lipschitz estimate on the force fields \cite[Lemma 3.2]{HKI2}, see also \cite[Lemma 3.3]{GPI-WP} and \cite[Lemma 8.1]{Majda_Bertozzi_2001}:
\begin{lemma}\label{thm:Log-Lip estimate}
    Let $U_i$ satisfy $\sigma\Delta U_i = \rho_{f_i} - 1,\: i = 1,2, \sigma = \pm 1$, on $\Td$ in the distributional sense. Then there is a constant $C > 0$ such that for all $x, y \in \Td,\: i =1, 2$, it holds
    \begin{equation}\label{eq:Log-Lip field estimate}
        \abs{\nabla U_i(x) - \nabla U_i(y)} \le C\abs{x - y}\log\left(\frac{4\sqrt{d}}{\abs{x - y}}\right)\norm{\rho_{f_i} - 1}_{L^\infty(\Td)}. 
    \end{equation}
\end{lemma}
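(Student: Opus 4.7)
The plan is to use the convolution representation of $\nabla U_i$ with the (periodic) Newton kernel and then split the domain of integration into a near-field region, where we exploit the local integrability of the singular kernel, and a far-field region, where we exploit the mean-value estimate for the kernel's gradient. Concretely, on the torus $\Td$ (taken of unit size so its diameter is $\sqrt d$), there exists a Green's function $G$ with $\sigma\Delta G = \delta_0 - 1$ whose singular part coincides with the Euclidean fundamental solution, so that $\nabla G(z) = c_d \, z/|z|^d + R(z)$ with $R$ smooth and bounded on $\Td$. Writing $\nabla U_i(x) = \int_{\Td}\nabla G(x-z)(\rho_{f_i}(z)-1)\,dz$, the smooth remainder $R$ contributes a Lipschitz term bounded by $C|x-y|\,\|\rho_{f_i}-1\|_{L^\infty}$, which is subsumed into the stated bound, so it suffices to handle the singular kernel $K(z):=c_d z/|z|^d$.

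Set $h:=|x-y|$; we may assume $h\le \sqrt d/2$, otherwise the estimate is trivial after enlarging $C$. The near-field region is $B_{2h}(x)\cup B_{2h}(y)$: here I would bound the contribution via the triangle inequality by
\begin{equation*}
    \|\rho_{f_i}-1\|_{L^\infty}\left(\int_{B_{2h}(x)}\frac{dz}{|x-z|^{d-1}} + \int_{B_{2h}(y)}\frac{dz}{|y-z|^{d-1}}\right) \le C\,h\,\|\rho_{f_i}-1\|_{L^\infty},
\end{equation*}
which uses only that $K$ is locally integrable with the right homogeneity. In the complementary far-field region, where $|x-z|\ge 2h$ (hence also $|y-z|\ge h$ by the triangle inequality), I would apply the mean-value theorem along the segment $[x,y]$ to $K$, giving the pointwise estimate $|K(x-z)-K(y-z)|\le C|x-y|/|x-z|^d$. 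Integrating over $\{2h\le |x-z|\le \sqrt d\}$ in spherical coordinates yields
\begin{equation*}
    \int_{2h\le |x-z|\le \sqrt d}\frac{|x-y|}{|x-z|^d}\,dz \le C\,h\,\log\!\left(\frac{\sqrt d}{h}\right),
\end{equation*}
and multiplying by $\|\rho_{f_i}-1\|_{L^\infty}$ gives the dominant logarithmic term.

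Adding the two contributions and adjusting the constant to absorb the $h$-term into $h\log(4\sqrt d/h)$ (valid since $h\le \sqrt d/2$ forces $\log(4\sqrt d/h)\ge \log 8\ge 1$), we obtain the desired inequality. The main conceptual obstacle is not analytic but bookkeeping: making sure the periodic structure of the Green's function is separated cleanly into its Euclidean singular part and a bounded smooth remainder, and choosing the cutoff radius ($2h$) so that the near-field and far-field terms match in order $h\log(1/h)$ rather than producing a spurious $h\log^2(1/h)$. The factor $4\sqrt d$ inside the logarithm simply reflects the choice of cutoff $2h$ together with the torus diameter $\sqrt d$, and can be rendered exact by tracking constants in the spherical integration above.
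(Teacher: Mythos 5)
The paper does not actually prove Lemma~\ref{thm:Log-Lip estimate}; it is recalled from \cite[Lemma 3.2]{HKI2} (see also \cite[Lemma 8.1]{Majda_Bertozzi_2001}), so there is no in-paper proof to compare against. Your argument is precisely the classical Majda--Bertozzi near/far-field decomposition that those references use, and it is correct: you peel off the smooth periodic remainder of the Green's function, bound the near-field contribution of the Euclidean singular core $c_d z/|z|^d$ by direct integrability of a kernel of order $|z|^{-(d-1)}$ (giving $O(h)$), and handle the far field via the mean-value bound $|K(x-z)-K(y-z)|\lesssim h/|x-z|^d$ (valid since for $|x-z|\ge 2h$ every point of the segment $[x-z,y-z]$ has modulus $\ge |x-z|/2$), integrating over the annulus to produce the logarithm, with the $2h$ cutoff chosen exactly so that both pieces are of order $h\log(1/h)$. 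The only slip is a harmless bookkeeping one: the diameter of the unit torus is $\sqrt{d}/2$, not $\sqrt d$, which only shifts constants and is already absorbed by the factor $4\sqrt d$ inside the logarithm.
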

\begin{lemma}\label{thm:Log-Lip estimate Rd}
    Let $U_i$ satisfy $\sigma\Delta U_i = \rho_{f_i},\: i = 1,2, \sigma = \pm 1$, on $\Rd$ in the distributional sense. Then there is a constant $C_d > 0$ such that
    \begin{equation*}
        \norm{\nabla U_i}_{L^\infty(\Rd)} \le C_d \left(\norm{\rho_{f_i}}_{L^1(\Rd)} + \norm{\rho_{f_i}}_{L^\infty(\Rd)}\right),
    \end{equation*}
    and for all $x, y \in \Rd$ with $\abs{x-y} < 1/e$, $i =1, 2$, it holds
    \begin{equation}\label{eq:Log-Lip field estimate Rd}
         \abs{\nabla U_i(x) - \nabla U_i(y)} \le C_d\abs{x - y}\log\left(\frac{1}{\abs{x - y}}\right)\left(\norm{\rho_{f_i}}_{L^1(\Rd)} + \norm{\rho_{f_i}}_{L^\infty(\Rd)}\right). 
    \end{equation}
    In particular, for all $x, y \in \Rd$,
    \begin{equation}\label{eq:full Log-Lip field estimate Rd}
        \abs{\nabla U_i(x) - \nabla U_i(y)} \le C_d\abs{x - y}\left(1 +\log^{-}(\abs{x-y})\right)\left(\norm{\rho_{f_i}}_{L^1(\Rd)} + \norm{\rho_{f_i}}_{L^\infty(\Rd)}\right),
    \end{equation}
    with $\log^{-}(s) := \max\{-\log(s),0\}$.
\end{lemma}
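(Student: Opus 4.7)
The plan is to work directly with the Newton potential representation $U_i = \sigma G_d \ast \rho_{f_i}$, where $G_d$ denotes the fundamental solution of $\Delta$ on $\Rd$, and exploit the pointwise bounds $\abs{\nabla G_d(z)} \lesssim \abs{z}^{1-d}$ and $\abs{\nabla^2 G_d(z)} \lesssim \abs{z}^{-d}$ valid away from the origin. First, for the $L^\infty$-bound, I would split
\begin{equation*}
    \nabla U_i(x) = \sigma \int_{\Rd} \nabla G_d(x-z)\,\rho_{f_i}(z)\,dz
\end{equation*}
into the near contribution $\{\abs{x-z}\le 1\}$, on which $\abs{z}^{1-d}$ is integrable over the unit ball and the integral is controlled by $\norm{\rho_{f_i}}_{L^\infty(\Rd)}$, and the far contribution $\{\abs{x-z}>1\}$, on which the kernel is bounded by a dimensional constant and the integral is controlled by $\norm{\rho_{f_i}}_{L^1(\Rd)}$. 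Summing the two yields the claimed $L^\infty$-bound.

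For the log-Lipschitz estimate, given $x,y\in\Rd$ with $r := \abs{x-y} < 1/e$, I would write
\begin{equation*}
    \nabla U_i(x) - \nabla U_i(y) = \sigma \int_{\Rd} \bigl(\nabla G_d(x-z) - \nabla G_d(y-z)\bigr)\rho_{f_i}(z)\,dz
\end{equation*}
and decompose the domain of integration into three regions. \emph{(i) Near field} $\{\abs{x-z}\le 2r\}\cup\{\abs{y-z}\le 2r\}$: I estimate each term $\nabla G_d(x-z)$ and $\nabla G_d(y-z)$ separately by $\abs{\cdot}^{1-d}$ and integrate over a ball of radius $2r$, obtaining a contribution $\lesssim r\,\norm{\rho_{f_i}}_{L^\infty(\Rd)}$. \emph{(ii) Intermediate annulus} $\{2r\le\abs{x-z}\le 1\}$: the segment joining $x-z$ to $y-z$ stays away from the origin since $\abs{x-z}\ge 2r = 2\abs{x-y}$, so the mean value theorem applied to $\nabla G_d$ gives $\abs{\nabla G_d(x-z)-\nabla G_d(y-z)}\lesssim r/\abs{x-z}^d$, and integration in polar coordinates produces a factor $\log(1/(2r))\lesssim \log(1/r)$, for a total contribution $\lesssim r\log(1/r)\norm{\rho_{f_i}}_{L^\infty(\Rd)}$. \emph{(iii) Far field} $\{\abs{x-z}>1\}$: the same mean value bound gives a kernel $\lesssim r/\abs{x-z}^d\le r$, and integrating against $\rho_{f_i}$ yields $\lesssim r\,\norm{\rho_{f_i}}_{L^1(\Rd)}$. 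Adding the three pieces and absorbing $r\,\norm{\rho_{f_i}}_{L^\infty(\Rd)}$ into $r\log(1/r)\norm{\rho_{f_i}}_{L^\infty(\Rd)}$ using $r<1/e$ yields (\ref{eq:Log-Lip field estimate Rd}).

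Finally, the full estimate (\ref{eq:full Log-Lip field estimate Rd}) follows by dichotomy: for $\abs{x-y}<1/e$ it reduces to (\ref{eq:Log-Lip field estimate Rd}), since then $\log(1/\abs{x-y})=\log^{-}(\abs{x-y})\le 1+\log^{-}(\abs{x-y})$; for $\abs{x-y}\ge 1/e$ one uses the triangle inequality and the $L^\infty$-bound already proved to get $\abs{\nabla U_i(x)-\nabla U_i(y)}\le 2\norm{\nabla U_i}_{L^\infty(\Rd)}$, and then absorbs the factor $\abs{x-y}(1+\log^{-}(\abs{x-y}))\ge 1/e$ into the constant. The main technical obstacle is the intermediate annulus (ii): one must verify that the entire segment between $x-z$ and $y-z$ stays away from the origin so that the Hessian estimate on $\nabla G_d$ applies pointwise along the segment; this is precisely why the threshold $2r$ (rather than $r$) is chosen in the splitting. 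Once this bookkeeping is in place, the three contributions combine in the expected way to produce the log-Lipschitz modulus of continuity.
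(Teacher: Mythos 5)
The paper does not prove this lemma: it is stated as a recollection, with the estimate attributed to the cited references (notably \cite[Lemma 3.3]{GPI-WP} and the classical Biot--Savart log-Lipschitz bound \cite[Lemma 8.1]{Majda_Bertozzi_2001}). Your proof is the standard potential-theoretic argument that underlies those citations, and it is correct: representing $\nabla U_i = \sigma\,\nabla G_d * \rho_{f_i}$, the $L^\infty$-bound follows from the near/far split with $|\nabla G_d(z)|\lesssim |z|^{1-d}$, and the log-Lipschitz modulus from the three-region split. Your key bookkeeping point is right: on the intermediate annulus $\{2r\le |x-z|\le 1\}$ the segment joining $x-z$ to $y-z$ stays at distance $\ge |x-z|-r\ge |x-z|/2$ from the origin, so $|\nabla^2 G_d|\lesssim |x-z|^{-d}$ along it, and the radial integral $\int_{2r}^1 s^{-1}\,ds$ produces the $\log(1/r)$ factor; the near-field contribution ($\lesssim r\,\|\rho\|_{L^\infty}$) and far-field contribution ($\lesssim r\,\|\rho\|_{L^1}$) are absorbed into the log term using $r<1/e$. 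The passage to the global bound \eqref{eq:full Log-Lip field estimate Rd} by dichotomy on $|x-y|\gtrless 1/e$, using $|x-y|(1+\log^{-}|x-y|)\ge 1/e$ and the $L^\infty$-bound on $\nabla U_i$, is also correct. In short: the proposal is correct and supplies exactly the argument the paper leaves implicit.
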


\begin{proof}[Proof of Theorem \ref{thm:Iacobelli's W_p}]
    \begin{align*}
    \dot{D}_p(t) = &\:\frac{1}{p} \int_{(\XRd)^2} \dot{\lambda}(t)\abs{X_1 - X_2}^p \: d\pi_0 \\
        &+ \int_{(\XRd)^2} \lambda(t)\abs{X_1 - X_2}^{p-2}(X_1 - X_2)\cdot(V_1 - V_2) \: d\pi_0 \\
        &+ \int_{(\XRd)^2} \abs{V_1 - V_2}^{p-2}(V_1 - V_2)\cdot(\nabla_xU_2(t, X_2) - \nabla_x U_1(t, X_1)) \: d\pi_0.
    \end{align*}
    The last two terms are estimated using H\"older's inequality with respect to the measure $\pi_0$, and we have
    \begin{multline}\label{eq:dot D decomposition}
        \dot{D}_p(t) \le \frac{1}{p} \int_{(\XRd)^2} \dot{\lambda}(t)\abs{X_1 - X_2}^p \: d\pi_0 \\ + \lambda^{1/p}(t)\left(pD_p(t)\right) +
        \left(pD_p(t)\right)^{1/p'} \left(\int_{(\XRd)^2} \abs{\nabla_x U_2(t, X_2) - \nabla_x U_1(t, X_1)}^p \: d\pi_0\right)^{1/p}.
    \end{multline}
    Recall the separation of the difference of force fields;
    \begin{equation*}
        \abs{\nabla_x U_2(t, X_2) - \nabla_x U_1(t, X_1)} \le \abs{\nabla_x U_2(t, X_2) - \nabla_x U_2(t, X_1)} + \abs{\nabla_x U_1(t, X_1) - \nabla_x U_2(t, X_2)},
    \end{equation*}
    whence
    \begin{equation}\label{eq:KW p-force field to T_1 T_2 estimate}
        \left(\int_{(\XRd)^2} \abs{\nabla_x U_2(t, X_2) - \nabla_x U_1(t, X_1)}^p \: d\pi_0\right)^{1/p} \le T_1(t) + T_2(t),
    \end{equation}
    where
    \begin{multline}\label{eq:KW T_1 T_2}
        T_1(t) := \left(\int_{(\XRd)^2} \abs{\nabla_x U_2(t, X_2) - \nabla_x U_2(t, X_1)}^p \: d\pi_0\right)^{1/p}, \\ T_2(t) := \left(\int_{(\XRd)^2} \abs{\nabla_x U_1(t, X_1) - \nabla_x U_2(t, X_1)}^p \: d\pi_0\right)^{1/p}. 
    \end{multline}
    First, consider the torus case $\X = \Td$: We estimate $T_1$ (\ref{eq:KW T_1 T_2}) using the non-decreasing concave function on $[0, +\infty)$ given by
    \begin{equation*}
        \Phi_p(s) :=
        \begin{cases}
            s\log^p\left(\frac{(4\sqrt{d})^p}{s}\right) & \text{if } s \le (4\sqrt{d}/e)^p, \\
            \left(\frac{4p\sqrt{d}}{e}\right)^p & \text{if } s > (4\sqrt{d}/e)^p,
        \end{cases}
    \end{equation*}
    together with the Log-Lipschitz estimate (\ref{eq:Log-Lip field estimate}) from Lemma \ref{thm:Log-Lip estimate} and (\ref{eq:A Iacobelli}) to get\footnote{Note that, since $\rho_{f_i}(t) \ge 0$ and $\norm{\rho_{f_i}(t)}_{L^\infty(\Td)} \ge 1$, then $\norm{\rho_{f_i}(t) - 1}_{L^\infty(\Td)} \le \norm{\rho_{f_i}(t)}_{L^\infty(\Td)} \le A(t)$ for i = 1, 2.}
    \begin{equation*}
        T_1(t) \le C A(t) \left(\int_{(\TdRd)^2} \Phi_p\left(\abs{X_1 - X_2}^p\right) \: d\pi_0\right)^{1/p}
    \end{equation*}
    provided that $\abs{X_1 - X_2}^p \le (4\sqrt{d}/e)^p$, but this is always the case since the distance between points in the torus cannot exceed $\sqrt{d}$. Thus by Jensen's inequality we have
    \begin{align*}
        T_1(t) &\le CA(t)\Bigg[\Phi_p\left(\int_{(\TdRd)^2} \abs{X_1 - X_2}^p \: d\pi_0\right)\Bigg]^{1/p} \le CA(t)\Bigg[\Phi_p\left(\frac{pD_p(t)}{\lambda(t)}\right)\Bigg]^{1/p}.
    \end{align*}
    Now, the considered regime becomes
    \begin{equation}\label{eq:regime 1}
        \frac{pD_p(t)}{\lambda(t)} \le \left(4\sqrt{d}/e\right)^p,
    \end{equation}
    so that
    \begin{equation}\label{eq:T_1 intermediate estimate}
        T_1(t) \le CA(t)\left(\frac{pD_p(t)}{\lambda(t)}\right)^{1/p}\Bigg[\abs{\log\left(\frac{pD_p(t)}{\lambda(t)}\right)} + p\log\left(4\sqrt{d}\right)\Bigg].
    \end{equation}
    We replace $\lambda(t) = \abs{\log D_p(t)}^{p/2}$, and consider yet another regime, now dictated by
    \begin{equation}\label{eq:regime 2}
        D_p(t) \le \frac{1}{e},
    \end{equation}
    so that $\abs{\log D_p(t)} \ge 1$. Note that this regime is compatible with the regime (\ref{eq:regime 1}) needed for the function $\Phi_p$ in the sense that if $pD_p(t) \le (4\sqrt{d}/e)^p $ holds, then $pD_p(t)/\abs{\log D_p(t)}^{p/2} \le pD_p(t) \le (4\sqrt{d}/e)^p$, and since $p/e \le (4\sqrt{d}/e)^p$, we can only consider the regime (\ref{eq:regime 2}). We estimate the logarithms in (\ref{eq:T_1 intermediate estimate}) in this new regime (\ref{eq:regime 2}) using an elementary inequality valid within this regime;
    \begin{equation}\label{eq:elementary ineq regime}
        \abs{\log\left(\frac{pD_p(t)}{\abs{\log D_p(t)}^{p/2}}\right)} \le 2\left(1 + \log p + \frac{p}{2}\right)\abs{\log D_p(t)},
    \end{equation}
    and set $C_p := 2(1+\log p + p/2)$. Hence (\ref{eq:T_1 intermediate estimate}) becomes
    \begin{equation}\label{eq:T_1 estimate}
        T_1(t) \le CA(t)\left(\frac{pD_p(t)}{\lambda(t)}\right)^{1/p}\Big[C_p\abs{\log D_p(t)} + p\log\left(4\sqrt{d}\right)\Big].
    \end{equation}
    We move to the estimation of $T_2$ (\ref{eq:KW T_1 T_2}). The $L^p$-estimate (\ref{eq:L^p force field estimate}) from Proposition \ref{thm:L^p estimate} yields
    \begin{equation}\label{eq:T_2 intermadiate estimate}
        T_2(t) \le C_{\HW}A(t)W_p(\rho_{f_1}(t), \rho_{f_2}(t))
    \end{equation}
    recalling (\ref{eq:A Iacobelli}). Since $(X_1(t), X_2(t))_{\#}\pi_0$ has marginals $\rho_{f_1}(t)$ and $\rho_{f_2}(t)$, we can estimate the Wasserstein distance between the densities by $D_p$ (see \cite[Lemma 3.6]{Loep}). More precisely, by the definition of the Wasserstein distance, we have
    \begin{align*}
    W_p^p(\rho_{f_1}(t), \rho_{f_2}(t)) &= \inf_{\gamma \in \Pi(\rho_{f_1}(t), \rho_{f_2}(t))} \int_{\TdTd} \abs{x-y}^p \: d\gamma(x,y) \\
        &\le \int_{\TdTd} \abs{x-y}^p \: d\Big[\left(X_1(t), X_2(t)\right)_\#\pi_0\Big](x,y) \\
        &= \int_{(\TdRd)^2} \abs{X_1(t,x,v) - X_2(t,y,w)}^p \: d\pi_0(x,v,y,w) \le \frac{pD_p(t)}{\lambda(t)}.
    \end{align*}
    We replace $\lambda(t) = \abs{\log D_p(t)}^{p/2}$ and the above estimate in (\ref{eq:T_2 intermadiate estimate}) to get
    \begin{equation}\label{eq:T_2 estimate}
        T_2(t) \le C_{\HW}A(t)\left(\frac{pD_p(t)}{\abs{\log D_p(t)}^{p/2}}\right)^{1/p}.
    \end{equation}
    Putting altogether estimates (\ref{eq:dot D decomposition}, \ref{eq:KW p-force field to T_1 T_2 estimate}, \ref{eq:T_1 estimate}, \ref{eq:T_2 estimate}) gives in the considered regime (\ref{eq:regime 2}) that
    \begin{multline*}
        \dot{D}_p(t) \le \frac{1}{p} \int_{(\TdRd)^2} \frac{-\dot{D}_p(t)}{ D_p(t)}\abs{\log D_p(t)}^{p/2-1}\abs{X_1 - X_2}^p \: d\pi_0 \\ +        \left(pD_p(t)\right)\Bigg[\sqrt{\abs{\log D_p(t)}} +  CA(t)\left(C_p\sqrt{\abs{\log D_p(t)}}(t) + \frac{p\log\left(4\sqrt{d}\right)}{\sqrt{\abs{\log D_p(t)}}}\right) + \frac{C_{\HW}A(t)}{\sqrt{\abs{\log D_p(t)}}}\Bigg].
    \end{multline*}
    If $\dot{D}_p \le 0$, then we do not do anything. Otherwise, then the first term in the above estimate is negative, and therefore
    \begin{equation*}\label{eq:sign dot KW bound}
        \dot{D}_p(t) \le \left(pD_p(t)\right)\Bigg[\sqrt{\abs{\log D_p(t)}} +  CA(t)\left(C_p\sqrt{\abs{\log D_p(t)}}(t) + \frac{p\log\left(4\sqrt{d}\right)}{\sqrt{\abs{\log D_p(t)}}}\right) + \frac{C_{\HW}A(t)}{\sqrt{\abs{\log D_p(t)}}}\Bigg].
    \end{equation*}
    Since the right-hand side is non-negative, independently of the sign of $\dot{D}_p$, this bound is always valid in the regime, and using that $\abs{\log D_p(t)} \ge 1$, together with $A(t) \ge 1$, we get
    \begin{equation*}
        \dot{D}_p(t) \le \tilde{C}_{\KW} A(t) D_p(t)\sqrt{\abs{\log D_p(t)}},
    \end{equation*}
    where $\tilde{C}_{\KW} := p \,\times \left[1 + C \times \left(C_p + p\log\left(4\sqrt{d}\right)\right) + C_{\HW}\right]$. Therefore,
    \begin{equation}\label{eq:D estimate}
        D_p(t) \le \exp\left\{-\left(\sqrt{\abs{\log D_p(0)}} - C_{\KW}\int_0^t A(s) \: ds\right)^{2}\right\},
    \end{equation}
    where $C_{\KW} := \tilde{C}_{\KW}/2$ depends only on $p$ and $d$. This implies in particular that (\ref{eq:regime 2}) holds if
    \begin{equation}\label{eq:regime 3}
        \sqrt{\abs{\log D_p(0)}} \ge C_{\KW}\int_0^T A(s) \: ds + 1.
    \end{equation}
    It remains to compare $D_p$ to the Wasserstein distance between the solutions in the regime (\ref{eq:regime 2}).
    By the definition of the Wasserstein distance, since $(Z_1(t), Z_2(t))_{\#}\pi_0$ has marginals $f_1(t)$ and $f_2(t)$, we have that
    \begin{align*}
    W_p^p(f_1(t), f_2(t)) &= \inf_{\gamma \in \Pi(f_1(t), f_2(t))} \int_{(\TdRd)^2} \abs{x-y}^p \: d\gamma(x,v,y,w) \\
        &\le \int_{(\TdRd)^2} \abs{x-y}^p + \abs{v-w}^p \: d\Big[\left(Z_1(t), Z_2(t)\right)_\#\pi_0\Big](x,v,y,w) \\
        &= \int_{(\TdRd)^2} \abs{X_1 - X_2}^p + \abs{V_1 - V_2}^p \: d\pi_0 \le pD_p(t).
    \end{align*}
    For the initial Wasserstein distance, since $\pi_0$ is optimal, we get
    \begin{equation*}
        D_p(0) \le \frac{1}{p}\abs{\log D_p(0)}\int_{(\TdRd)^2} \abs{x-y}^p + \abs{v-w}^p \: d\pi_0 = \frac{1}{p}\abs{\log D_p(0)}W_p^p(f_1(0), f_2(0)),
    \end{equation*}
    which we rewrite as
    \begin{equation*}
        \frac{D_p(0)}{\abs{\log D_p(0)}} \le \frac{1}{p}W_p^p(f_1(0), f_2(0)).
    \end{equation*}
    Note that, near the origin, the inverse of the function $s \mapsto s/\abs{\log s}$ behaves like $\tau \mapsto \tau\abs{\log \tau}$. In particular, there is a universal constant $c_0 > 0$ such that
    \begin{equation*}
        \frac{s}{\abs{\log s}} \le \tau\quad \text{for some $0 \le \tau \le c_0$} \quad \implies \quad s \le p\tau\abs{\log\tau}.
    \end{equation*}
    Hence for sufficiently small initial distance such that $W_p^p(f_1(0), f_2(0)) \le p c_0$, then
    \begin{equation*}
        D_p(0) \le W_p^p(f_1(0), f_2(0))\abs{\log\left(\frac{1}{p}W_p^p(f_1(0), f_2(0))\right)}.
    \end{equation*}
    Combining these bounds with (\ref{eq:D estimate}), and recalling (\ref{eq:regime 3}), this implies
    \begin{multline*}
        W_p^p(f_1(t), f_2(t)) \\ \le p\exp\Bigg\{-\left(\sqrt{\abs{\log\Bigg\{W_p^p(f_1(0), f_2(0))\abs{\log\left(\frac{1}{p}W_p^p(f_1(0), f_2(0))\right)}\Bigg\}}} - C_{\KW}\int_0^t A(s) \: ds\right)^{2}\Bigg\}
    \end{multline*}
    provided $W_p^p(f_1(0), f_2(0)) \le p c_0$ and
    \begin{equation*}
        \sqrt{\abs{\log\Bigg\{W_p^p(f_1(0), f_2(0))\abs{\log\left(\frac{1}{p}W_p^p(f_1(0), f_2(0))\right)}\Bigg\}}} \ge C_{\KW}\int_0^T A(s) \: ds + 1.
    \end{equation*}
    We conclude the proof of the torus case by \cite[Lemma 3.7 \& Remark 3.8]{Iacobelli2022}.

    Second, we consider the whole space case $\X = \Rd$: The only difference lies in the the separation of force fields. We have to estimate $T_1$ and $T_2$ defined in (\ref{eq:KW T_1 T_2}). We split $T_1$ in two integrals;
    \begin{multline*}
        T_1(t)^p = \left(\int_{\abs{X_1 - X_2} < 1/e} d\pi_0 + \int_{\abs{X_1 - X_2} \ge 1/e} d\pi_0\right) \Big[\abs{\nabla_x U_2(t, X_2) - \nabla_x U_2(t, X_1)}^p\Big] := I_1(t) + I_2(t).
    \end{multline*}
    On one hand, for $I_1$, using the Log-Lipschitz estimate (\ref{eq:Log-Lip field estimate Rd}) from Lemma \ref{thm:Log-Lip estimate Rd} and (\ref{eq:A Iacobelli}), we get
    \begin{align*}
        I_1(t) &\le C_d^p A^p(t) \int_{\abs{X_1 - X_2} < 1/e} \abs{X_1 - X_2}^p\log^p\left(\frac{1}{\abs{X_1 - X_2}^p}\right) \: d\pi_0 \\
            &\le C_d^p A^p(t) \int_{\abs{X_1 - X_2} < 1/e} \Phi_p(\abs{X_1 - X_2}^p) \: d\pi_0.
    \end{align*}
    Applying Jensen's inequality, we have
    \begin{equation*}
        I_1(t) \le C_d^p A^p(t) \Phi_p\left(\int_{\abs{X_1 - X_2} < 1/e} \abs{X_1 - X_2}^p \: d\pi_0\right) \le C_d^p A^p(t) \Phi_p\left(\frac{pD_p(t)}{\lambda(t)}\right).
    \end{equation*}
    On the other hand, for $I_2$, the  estimate (\ref{eq:full Log-Lip field estimate Rd}) from Lemma \ref{thm:Log-Lip estimate Rd} yields
    \begin{equation*}
        I_2(t) \le C_d^p A^p(t) \int_{\abs{X_1 - X_2} \ge 1/e} \abs{X_1 - X_2}^p \: d\pi_0 \le C_d^p A^p(t) \left(\frac{pD_p(t)}{\lambda(t)}\right).
    \end{equation*}
    Again, we impose the regime $D_p(t) \le 1/e$, with $\lambda(t) = \abs{\log D_p(t)}^{p/2}$, so that
    \begin{equation*}
        T_1(t) \le \left(I_1(t) + I_2(t)\right)^{1/p} \le 2^{1/p} C_d A(t) \left(\frac{pD_p(t)}{\lambda(t)}\right)^{1/p} \abs{\log\left(\frac{pD_p(t)}{\lambda(t)}\right) + p\log\left(4\sqrt{d}\right)}
    \end{equation*}
    becomes
    \begin{equation*}
        T_1(t) \le 2^{1/p} C_d A(t) \left(\frac{pD_p(t)}{\lambda(t)}\right)^{1/p} \left[C_p\abs{\log D_p(t)} + p\log\left(4\sqrt{d}\right)\right]
    \end{equation*}
    after using the elementary inequality (\ref{eq:elementary ineq regime}) valid within the considered regime. The estimation of $T_2$ (\ref{eq:KW T_1 T_2}) is again a direct consequence of the $L^p$-estimate (\ref{eq:L^p force field estimate}) from Proposition \ref{thm:L^p estimate};
    \begin{equation*}
        T_2(t) \le C_{\HW}A(t)W_p(\rho_{f_1}(t), \rho_{f_2}(t)).
    \end{equation*}
    From now on, the proof is the same as in the torus case $\X = \Td$ with
    \begin{equation*}
        C_{\KW} := p \,\times \left[1 + 2^{1/p}C_d \times \left(C_p + p\log\left(4\sqrt{d}\right)\right) + C_{\HW}\right].
    \end{equation*}
\end{proof}

\noindent \textbf{Acknowledgement:} The authors would like to thank the anonymous referees for their useful and detailed comments, which improved the presentation of the paper. The second author also acknowledges partial financial support from the Dutch Research Council (NWO): project number OCENW.M20.251.

\printbibliography
\end{document}